\documentclass[11pt,a4paper]{article}
\usepackage{amssymb}
\usepackage{eurosym}
\usepackage{amsfonts}
\usepackage{amsmath}
\usepackage{amsthm}
\usepackage{graphicx}
\usepackage{float}
\usepackage{hyperref}

\hypersetup{
	colorlinks=true,
	linkcolor=blue,
	anchorcolor=blue,
	citecolor=blue
}
\newtheorem{theorem}{Theorem}[section]

\newtheorem{lemma}[theorem]{Lemma}

\theoremstyle{definition}
\newtheorem{definition}[theorem]{Definition}

\newtheorem{remark}[theorem]{Remark}

\newtheorem*{da}{Data availability}
\renewenvironment{proof}[1][Proof]{\noindent\textbf{#1.} }{\ \rule{0.5em}{0.5em}}

\renewcommand{\theequation}{\thesection.\arabic{equation}}
\allowdisplaybreaks
\input{tcilatex}

\def\limfunc#1{\mathop{\mathrm{#1}}}
\def\func#1{\mathop{\mathrm{#1}}\nolimits}

\def\dint{\displaystyle\int}

\def\Xint#1{\mathchoice
{\XXint\displaystyle\textstyle{#1}}%
{\XXint\textstyle\scriptstyle{#1}}%
{\XXint\scriptstyle\scriptscriptstyle{#1}}%
{\XXint\scriptscriptstyle\scriptscriptstyle{#1}}%
\!\int}
\def\XXint#1#2#3{{\setbox0=\hbox{$#1{#2#3}{\int}$ }
\vcenter{\hbox{$#2#3$ }}\kern-.6\wd0}}

\def\oint{\Xint-}

\def\enddoc{\end{document}}

\def\Qlb#1{#1}

\def\Qcb#1{#1} 

\def\FRAME#1#2#3#4#5#6#7#8
{
 \begin{figure}[H]
 \begin{center}
 \includegraphics[height=#3]{#7}
 \caption{#5}
 \label{#6}
 \end{center}
 \end{figure}
}

\begin{document}
	
	\title{Sharp long distance upper bounds for solutions of Leibenson’s equation on Riemannian manifolds}
	\author{Alexander Grigor'yan\and Jin Sun \and Philipp S\"urig}
	\date{March 2026}
	\maketitle
	
	\begin{abstract}
		We consider on Riemannian manifolds the Leibenson equation 
$\partial _{t}u=\Delta _{p}u^{q}$ that is also known as a doubly nonlinear evolution equation. We prove sharp upper estimates of weak subsolutions to this equation on Riemannian manifolds with non-negative Ricci curvature in the whole range of $p>1$ and $q>0$ satisfying $q(p-1)<1$. In this way, we improve the result of \cite{Grigoryan2024a} and prove Conjecture 1.2 from \cite{Grigoryan2024a}.
	\end{abstract}
	
	\let\thefootnote\relax\footnotetext{\textit{\hskip-0.6truecm 2020 Mathematics Subject Classification.} 35K55, 58J35, 35B05. \newline
		\textit{Key words and phrases.} Leibenson equation, doubly nonlinear
		parabolic equation, Riemannian manifold. \newline
		The first and the third author were funded by the Deutsche Forschungsgemeinschaft (DFG,
		German Research Foundation) - Project-ID 317210226 - SFB 1283.}
	
	\tableofcontents
	
	\section{Introduction}
	
	\setcounter{equation}{0}Let $M$ be an arbitrary Riemannian manifold. We consider solutions of the non-linear evolution
	equation 
	\begin{equation}
		\partial _{t}u=\Delta _{p}u^{q},  \label{evoeq}
	\end{equation}%
	where $p>1$, $q>0$, $u=u(x,t)$ is an unknown non-negative function of $x\in M$, $t\geq 0$, and 
	$\Delta _{p}$ is the Riemannian $p$-Laplacian $\Delta _{p}v=\func{div}\left( |\nabla v|^{p-2}\nabla v\right)$.
	
	The 
	equation (\ref{evoeq}) is frequently referred to as a \emph{doubly non-linear parabolic equation}. For the physical meaning of this equation  see \cite
	{grigor2024finite, leibenzon1945general, leibenson1945turbulent}. 
	
	When $M=\mathbb{R}^{n}$, G. I. Barenblatt \cite{barenblatt1952self}
	constructed for all $p>1, q>0$ spherically symmetric self-similar solutions of (\ref{evoeq}), that are nowadays called \textit{Barenblatt solutions}.
	
	Let us assume that \begin{equation}\label{orising}D:=1-q(p-1)>0.\end{equation}  If in addition \begin{equation*}
		\beta :=p-nD>0, \label{bcon}
	\end{equation*} then the Barenblatt solution  satisfies the estimate
	\begin{equation}\label{Barenintro}
		u(x, t)\simeq \dfrac{1}{%
			t^{n /\beta }} \left(1+ \dfrac{|x|}{t^{1 /\beta }}\right) ^{-\frac{p}{D}}
	\end{equation} (cf. Section 7.1 in \cite{Grigoryan2024a}), where the symbol "$\simeq $" means that the ratio of the terms is bounded from above and below by a positive constant.
	
	In \cite{Grigoryan2024a} two of the authors proved upper bounds for solutions of
	the Leibenson equation (\ref{evoeq}) on geodesically complete Riemannian manifolds in a subcase of (\ref{orising}). However, the long distance estimate obtained in this paper was not optimal.
	
	The purpose of the present paper is to obtain sharp estimates for solutions of (\ref{evoeq}) on Riemannian manifolds in the full range of $p$ and $q$ satisfying (\ref{orising}).	
	
	We understand solutions of  (\ref{evoeq}) in \(M\times \mathbb{R}_{+}\) in a certain weak sense (see Section \ref{secweaksing} for the definition).
	
 Denote by $\mu$ the \textit{Riemannian measure} on $M$, by $d$ the \textit{geodesic distance} and by $B(x, r)$ the \textit{geodesic ball} of radius $r$ centered at $x$. 
	
	The main result of the present paper is as follows (cf. \textbf{Theorem \ref{singmain}}).
	
	\begin{theorem}\label{singmainint}
		Let $M$ satisfy a relative
		Faber-Krahn inequality (see Section \ref{appFk} for definition) and assume that, for all $x\in M$ and all $R\geq 1,$%
		\begin{equation}\label{lowerball}
			\mu (B(x,R))\geq cR^{\alpha },
		\end{equation}%
		for some $c,\alpha >0.$ Assume that \emph{(\ref{orising})} holds and that 
		\begin{equation}\label{beta}
			\beta:=p-\alpha D >0.
		\end{equation}%
		Let $u$ be a bounded non-negative solution of\emph{(\ref{evoeq})} in $M\times [0, \infty)$ with initial function $u_{0}=u\left( \cdot ,0\right)\in L^{1}(M)\cap L^{\infty}(M)$. Set $A=\limfunc{supp}u_{0}$ and denote 
		$\left\vert x\right\vert =d(x,A).$
		Then, for all $t>0$ and all $x\in M$, we have%
		\begin{align}
			\left\Vert u\left( \cdot ,t\right) \right\Vert _{L^{\infty }(B(x,\frac{1}{2}\left\vert x\right\vert ))}\leq \frac{C}{t^{\alpha/\beta }} \left(1+\frac{|x|}{t^{1/\beta}}\right)^{-\frac{p}{D}},  \label{uoffonint}
		\end{align}%
		where the positive constants $C$ and $\gamma$ depend on $c, \alpha, p,q, ||u_{0}||_{L^{\infty}(M)}$, $||u_{0}||_{L^{1}(M)}$ and on the constants in the relative Faber-Krahn inequality.
	\end{theorem}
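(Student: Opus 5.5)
The estimate has two parts: a global on-diagonal bound $\|u(\cdot,t)\|_{L^\infty(M)}\le C t^{-\alpha/\beta}$, and an off-diagonal decay factor $(|x|/t^{1/\beta})^{-p/D}$ in the regime $|x|\ge t^{1/\beta}$. For $|x|\le t^{1/\beta}$ the bracket in (\ref{uoffonint}) is comparable to $1$, so the on-diagonal bound suffices there. The plan is to obtain both parts from a mean value (De Giorgi--Moser) inequality for subsolutions, derived from the relative Faber--Krahn inequality, combined with the lower volume bound (\ref{lowerball}).

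\emph{Step 1 (on-diagonal bound).} I will use the local $L^1$--$L^\infty$ mean value inequality for nonnegative subsolutions of (\ref{evoeq}) in a cylinder $Q=B(x,r)\times[t/2,t]$. I expect it to take the form
\begin{equation*}
\|u\|_{L^\infty(B(x,r/2)\times[3t/4,t])}\le C\Big(\frac{t}{r^p}\Big)^{-\frac{1}{D}}+C\Big(\frac{r^p}{t}\Big)^{\frac{\nu}{\beta_\nu}}\Big(\frac{1}{\mu(B(x,r))}\int_{t/2}^t\!\!\int_{B(x,r)}u\,d\mu\,\frac{ds}{t}\Big)^{\frac{p}{\beta_\nu}},
\end{equation*}
where $\nu$ is the Faber--Krahn exponent and $\beta_\nu=p-\nu D$. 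Such an inequality is of the type proved in \cite{Grigoryan2024a}, via Caccioppoli inequalities for $(u-k)_+$ and De Giorgi iteration.

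I then use $L^1$ conservation (or contraction), $\|u(\cdot,s)\|_{L^1}\le\|u_0\|_{L^1}$, together with $\mu(B(x,r))\ge cr^\alpha$. Choosing $r=t^{1/\beta}$ balances the two terms and gives $\|u(\cdot,t)\|_\infty\le Ct^{-\alpha/\beta}$ for $t\ge1$. For small $t$, boundedness of $u_0$ and the comparison/$L^\infty$ contraction give $\|u(\cdot,t)\|_\infty\le\|u_0\|_\infty$. The exponents in (\ref{uoffonint}) must match this balance, since $t^{-\alpha/\beta}(r^p/t)^{1/D}$ with $r=t^{1/\beta}$ gives exactly $t^{-\alpha/\beta}$.

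\emph{Step 2 (off-diagonal, the main obstacle).} Fix $x$ with $R:=|x|\ge t^{1/\beta}$ and the ball $B=B(x,R/2)$, which stays at distance at least $R/2$ from $A$. The target bound is $t^{-\alpha/\beta}(R/t^{1/\beta})^{-p/D}=t^{1/D}R^{-p/D}$, and it is independent of $\alpha$. This is precisely the first term $(t/r^p)^{1/D}$ of the mean value inequality with $r\simeq R$. Hence it suffices to show that the $L^1$ mass of $u$ in $B(x,R)\times[0,t]$ is small enough that the second term is dominated by the first.

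This requires a finite-propagation/mass-decay estimate away from the support. I would prove an $L^1$ tail estimate: for a cutoff $\eta$ vanishing on $A_{R/4}$ and equal to $1$ outside $A_{R/2}$, test the equation against $\eta^{p}$ (or against $(u-k)_+^{\,\epsilon}\eta^p$) and integrate in time. Using H\"older's inequality, this should give
\begin{equation*}
\int_{B(x,R)}u(\cdot,t)\,d\mu\lesssim \mu(B(x,R))\Big(\frac{t}{R^p}\Big)^{1/D},
\end{equation*}
i.e.\ a mean density of $u$ on $B(x,R)$ already of size $t^{1/D}R^{-p/D}$. Plugged into the second term with $r=R$, this gives
\[
\Big(\frac{R^p}{t}\Big)^{\nu/\beta_\nu}\Big(\frac{t}{R^p}\Big)^{p/(D\beta_\nu)}=\Big(\frac{t}{R^p}\Big)^{1/D},
\]
as desired. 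The difficulty, and the reason earlier bounds were not sharp, is that a naive tail estimate loses a power. To get the sharp density I would first prove the weaker supremum bound from \cite{Grigoryan2024a} and then run an iteration over dyadic annuli around $A$, improving the exponent at each step. At every level I would use the rescaled mean value inequality, whose "$(t/r^p)^{1/D}$" term is scale-invariant. Since $D>0$, the iteration converges to the self-similar exponent $p/D$.

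Finally, the cases $t\le1$ are handled by the same argument with $\|u_0\|_\infty$ in place of the on-diagonal bound, and the two regimes combine into (\ref{uoffonint}).
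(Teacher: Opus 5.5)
\textbf{The gap is in Step 2.} Your off-diagonal argument depends entirely on the $L^1$ tail bound $\int_{B(x,R)}u(\cdot,t)\,d\mu\lesssim\mu(B(x,R))\,(t/R^p)^{1/D}$, and you never prove it.
\begin{itemize}
\item Testing with $\eta^p$ leaves the flux term $\int|\nabla u^q|^{p-1}\eta^{p-1}|\nabla\eta|$. This term does not close by H\"older's inequality from the $L^1$ quantity alone. For $p\neq2$ you cannot shift a second derivative onto $\eta$. Even for $p=2$, a Herrero--Pierre type argument needs cutoffs with bounded Laplacian, which the relative Faber--Krahn hypothesis does not provide. You would need a separate weighted energy estimate of DiBenedetto's $L^1$-Harnack type, which you do not supply.
\item You then concede that the naive estimate ``loses a power'' and fall back on a dyadic iteration that improves the exponent at each step. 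No mechanism for the improvement is given, and nothing shows convergence to $p/D$. Moreover, the bound of \cite{Grigoryan2024a} you want to start from loses only a logarithm, not a power.
\item A cutoff vanishing on $A_{R/4}$ cannot control the mass on $B(x,R)$, which reaches into $A_{R/4}$.
\end{itemize}
Step 1 is an acceptable outline of Lemma \ref{Tlong}, but fix the sign: the tail term is $C(t/r^p)^{1/D}$. With your $C(t/r^p)^{-1/D}$, the choice $r=t^{1/\beta}$ gives $Ct^{\alpha/\beta}$. Also, your check $t^{-\alpha/\beta}(r^p/t)^{1/D}$ at $r=t^{1/\beta}$ equals $1$, not $t^{-\alpha/\beta}$.

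\textbf{The missing idea} is that no control of the mass entering the ball is needed. Take $B=B(x,|x|)$, which is disjoint from $A$. Since $u(\cdot,0)=0$ in $B$, the mean value inequality can be applied on the cylinder $B\times[0,t]$ with no cutoff in time (Lemma \ref{specMV}). The only scale factor is then $R^{-p}$. Together with the monotonicity of $\|u(\cdot,s)\|_{L^\infty(M)}$ (Lemma \ref{monl1}), this gives, for every $\sigma>0$,
\[
\|u\|_{L^\infty(\frac{1}{2}B\times[0,t])}^{\sigma+D}\le\frac{C(\sigma)}{\iota(B)\mu(B)R^p}\int_{B\times[0,t]}u^\sigma\le\frac{C(\sigma)\,t}{\iota(B)R^p}\,\|u_0\|_{L^\infty(M)}^{\sigma}.
\]
The crucial point is that $C(\sigma)$ stays bounded as $\sigma\to0^+$ (Remark \ref{constantsigma0}). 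This holds because the reduction from a large $\sigma_0$ produces the explicit constant $A^{(\lambda_0-\lambda)/\lambda}C(\sigma_0)$. Letting $\sigma\to0^+$ removes the factor $\|u_0\|_\infty^\sigma$ and turns the exponent into $1/D$. This yields directly the universal bound $C\bigl(t/(\iota(B)R^p)\bigr)^{1/D}$ of Lemma \ref{TFPSqy}, with no information about how much mass has arrived in $B$. The theorem then follows from $\min\{C_1t^{-\alpha/\beta},\,C_2(t/|x|^p)^{1/D}\}$ and the identity $\frac{p}{\beta D}-\frac{\alpha}{\beta}=\frac{1}{D}$.
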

	
	In particular, if the solution   \(u\) is continuous then the left hand side of (\ref{uoffonint}) can be replaced by \(u(x,t)\). 
	
The relative Faber-Krahn inequality is satisfied if, for example, $M$ has non-negative Ricci curvature (see \cite{Buser, grigor, Saloff}).

Comparing the upper bound (\ref{uoffonint}) from Theorem \ref{singmainint} with the estimate (\ref{Barenintro}) of the Barenblatt solution, we see that the estimate (\ref{uoffonint}) is sharp in $\mathbb{R}^{n}$. A similar comparison takes place for some class of spherically symmetric manifolds (model manifolds) satisfying the relative Faber-Krahn inequality (see Remark \ref{remarksharp}).

In particular, our Theorem \ref{singmainint} improves the result of \cite{Grigoryan2024a} and implies Conjecture 1.2 from that paper. In this paper the following was proved. Let $M$ satisfy the relative Faber-Krahn inequality and (\ref{lowerball}). Assume that \begin{equation}\label{singularint} 
	1<p<2\quad\textnormal{and}\quad1\leq q< \frac{1}{p-1}
\end{equation}
(note that (\ref{singularint}) implies (\ref{orising})) and (\ref{beta}) hold. Then it was proved in \cite{Grigoryan2024a} that  \begin{equation}\label{oldsing}\left\Vert u\left( \cdot ,t\right) \right\Vert _{L^{\infty }(B(x,\frac{1}{2}\left\vert x\right\vert ))}\leq \frac{C}{t^{\alpha/\beta }} \Phi\left(1+\frac{|x|}{t^{1/\beta}}\right)\end{equation} where $$\Phi(s)=s^{-\frac{p}{D}}\log^{\gamma}(1+s),$$ where $\gamma$ is a positive constant. Hence, Theorem \ref{singmainint} improves the result of \cite{Grigoryan2024a} in two ways. We prove Theorem \ref{singmainint} in the whole range of $p$ and $q$ satisfying (\ref{orising}). In particular, in contrast to the result in \cite{Grigoryan2024a}, our Theorem \ref{singmainint} also holds when $p=2$, that is, when equation (\ref{evoeq}) becomes the \textit{porous medium equation} and (\ref{orising}) amounts to $q<1$. Secondly, we prove the estimate (\ref{oldsing}) with $\Phi(s)$ without the logarithmic term $\log^{\gamma}(1+s)$.

Let us discuss the differences in the methods of the proof of Theorem \ref{singmainint} and the result in \cite{Grigoryan2024a}.

The main technical lemma (Lemma \ref{TFPSqy}) in the present paper about the long distance decay of solutions of (\ref{evoeq}) says the following.
Let $u$ be a bounded non-negative solution of (\ref{evoeq}) in $M\times [ 0,\infty) $. Let $B=B\left( x_{0},R\right) $ be a ball such that the initial function $u(\cdot, 0)=u_{0}$ satisfies
\begin{equation*}
	u_{0}=0\ \text{in\ }B.
\end{equation*}%
Then, for all $t> 0$,
\begin{equation}\label{upperoffint}
	\left\Vert u\right\Vert _{L^{\infty }\left( \frac{1}{2}B\times [ 0,t] \right) }\leq C_{B}\left( \frac{t}{R^{p}}\right) ^{\frac{1}{D }},
\end{equation}%
where the positive constant $C_{B}$ depends on the intrinsic geometry of $B$ and $\gamma$ depends on $p, q$ and on the constants in the relative Faber-Krahn inequality.

In the proof of Lemma \ref{TFPSqy} we use a certain mean value inequality that is stated in Lemma \ref{specMV} that we borrowed from \cite{surig2026existence}.

In contrast to (\ref{upperoffint}), it was proved in \cite{Grigoryan2024a} under the above assumptions and under the additional restrictions (\ref{singularint}) that 
$$\left\Vert u\right\Vert _{L^{\infty }\left( \frac{1}{2}B\times [ 0,t] \right) }\leq C_{B}\left( \frac{t}{R^{p}}\right) ^{\frac{1}{D }}\log^{\gamma}\left(2+\left( \frac{R^{p}}{t}\right) ^{\frac{1}{D }}\frac{||u_{0}||_{L^{1}(M)}}{\mu \left(
	B\right) }\right).$$
The additional restrictions (\ref{singularint}) in \cite{Grigoryan2024a} came from the mean value inequality in \cite{Grigoryan2024} which was used in the proof.

Let us also discuss the differences in the method of the proof of the main lemmas about the long time decay of solutions of (\ref{evoeq}) and how the estimates hold for different ranges for $p$ and $q$. The main ingredient in both proofs is a non-linear mean inequality (see Lemma \ref{Tmeansing}) which says the following. 
Let $u$
be a non-negative bounded solution in 
$Q=B\times \left[ 0,T\right],$ $B=B(x_{0}, R)$, $T>0$.
Then, for the cylinder 
\begin{equation*}
	Q^{\prime }=\frac{1}{2}B\times [ \frac{1}{2}T,T] ,
\end{equation*}%
we have%
\begin{equation*}
	\left\Vert u\right\Vert _{L^{\infty }\left( Q^{\prime }\right) }\leq \left( 
	\frac{C_{B}S}{\mu (B)}\int_{Q}u^{\sigma}\right) ^{1/(\sigma+D)
	},  
\end{equation*}%
where $$S=\frac{\left\Vert u\right\Vert _{L^{\infty }(Q)}^{D }}{T}+%
\frac{1}{R^{p}},$$
$\sigma>0$ is any and the constant $C_{B}$ depends on \(p,q,\sigma\) and the intrinsic geometry of the ball $B$ (in fact, on the Faber-Krahn inequality in $B$).

Even though, in both papers, the estimate of the long time decay follows from this mean value inequality and a modification of the classical De Giorgi iteration argument \cite{de1957sulla}, the ranges of $p$ and $q$ for which the mean value inequality holds are different. 

The mean value inequality in \cite{Grigoryan2024} is proved in the case (\ref{singularint}). This is because the proof uses the fact that, if $u$ is a non-negative subsolution of (\ref{evoeq}), then the function 
\begin{equation}\label{utrun}
	(u^{a}-\theta)_{+}^{1/a}
\end{equation} 
is also a subsolution of (\ref{evoeq}), provided $\theta\geq 0$ and 
\begin{equation}\label{condforaint}a:=\frac{q(p-1)-1}{p-2}\in (0, 1].\end{equation} 
In particular, the condition \(a\in(0,1]\) in (\ref{condforaint}) is satisfied provided (\ref{singularint}) holds.

The second ingredient in this proof is a \textit{Caccioppoli-type inequality}, which says the following. Assume that $u$ is a subsolution of (\ref{evoeq}) in $B\times I$. Let $\eta \left( x,t\right) $ be a locally Lipschitz non-negative bounded function. Then for any $t_{1}, t_{2}\in I$ such that $t_{1}< t_{2}$ and any $\sigma$ large enough,
$$\left[ \int_{B }u^{\sigma+D }\eta ^{p}\right] _{t_{1}}^{t_{2}}+c_{1}%
\int_{B\times [t_{1}, t_{2}]}\left\vert \nabla \left( u^{\sigma/p }\eta \right) \right\vert
^{p}\leq \int_{B\times [t_{1}, t_{2}]}pu^{\sigma+D}\partial_{t}\eta \eta^{p-1}+c_{2}u^{\sigma }\left\vert \nabla \eta \right\vert ^{p}.$$ Then the aforementioned property allows to apply this inequality to the subsolutions $$u_{k}=\left( u^{a}-\left( 1-2^{-k}\right) \theta \right) _{+}^{1/a},\quad k\geq 0, $$ for some fixed $\theta>0$, where $a$ is given by (\ref{condforaint}).

However, in the present paper we prove the mean value inequality for the whole range $p>1, q>0$. This is because we use in the proof instead a Caccioppoli type inequality of the form $$\left[ \int_{B }u_{k+1}^{\lambda }\eta ^{p}\right] _{t_{1}}^{t_{2}}+A^{k}%
\int_{B\times [t_{1}, t_{2}]}\left\vert \nabla \left( u_{k+1}^{\sigma/p }\eta \right) \right\vert
^{p}\leq \int_{B\times [t_{1}, t_{2}]}pu_{k}^{\sigma+D}\partial_{t}\eta \eta^{p-1}+B^{k}u_{k}^{\sigma }\left\vert \nabla \eta \right\vert ^{p}.$$ where $$u_{k}=\left( u-\left( 1-2^{-k}\right) \theta \right) _{+},\quad k\geq 0,$$ and $A, B$ are positive constants (cf. Lemma \ref{Lem1}).

	The structure of the present paper is as follows.
	
	In Section \ref{secweaksing} we define the notion of a weak solution of the equation (\ref{evoeq}).
	
	In Section \ref{appFk} the aforementioned relative Faber-Krahn inequality is discussed.
	
	In Section \ref{seclongdis} we prove the main technical lemma (Lemma \ref{TFPSqy}) about the long distance decay of solutions of (\ref{evoeq}).
	
	In Section \ref{longtimedecsing} we prove the main lemma (Lemma \ref{Tlong}) about the long time decay of solutions of (\ref{evoeq}).
	
	For further qualitative properties for solutions (\ref{evoeq}) in the case (\ref{orising}) in various settings, we refer to \cite{bonforte2008fast, di1990non, dibenedetto2002current, surig2024finite, surig2025gradient}.
	
	In the case $D<0$ the Barenblatt solution has a finite propagation speed, and the same phenomenon occurs on arbitrary Riemannian manifolds (see \cite{bonforte2005asymptotics,  de2022wasserstein,  dekkers2005finite, Grigoryan2024, grigor2024finite, grillo2016smoothing, vazquez2015fundamental}). 
	
	In the borderline case $D=0,$ the Barenblatt solutions is positive but decays exponentially in distance. Similar  sub-Gaussian upper bounds of solutions of (\ref{evoeq}) on Riemannian manifolds were proved in the case \(D=0\) in \cite{surig2024sharp}. 
	
	We denote by $c, c^{\prime}, C, C^{\prime}$ positive constants whose value might change at each occurance.
	
				\begin{da} \normalfont
		This article has no associated data.
	\end{da}
	
	\section{Weak subsolutions}\label{secweaksing}
	
	We consider in what follows the following evolution
	equation on a Riemannian manifold $M$:%
	\begin{equation}
		\partial _{t}u=\Delta _{p}u^{q}.  \label{dtv}
	\end{equation}%
	By a \textit{subsolution} of (\ref{dtv}) we mean a non-negative function $u$
	satisfying 
	\begin{equation}
		\partial _{t}u\leq \Delta _{p}u^{q}  \label{subdtv}
	\end{equation}%
	in a certain weak sense as explained below.
	
	We assume throughout that 
	\begin{equation*}
		p>1\ \ \text{and}\ \ \ q>0.
	\end{equation*}%
	Set%
	\begin{equation*}
		D=1-q(p-1).
	\end{equation*}%
	
	Let $\mu $ denote the Riemannian measure on $M$. For simplicity of notation,
	we frequently omit in integrations the notation of measure. All integration
	in $M$ is done with respect to $d\mu $, and in $M\times \mathbb{R}$ -- with
	respect to $d\mu dt$, unless otherwise specified.
	
	Let $\Omega$ be an open subset of $M$ and $I$ be an interval in $[0, \infty)$.
	
	\begin{definition}
		\normalfont
		We say that a non-negative function $u=u(x, t)$ is a \textit{weak
			subsolution} of (\ref{dtv}) in $\Omega\times I$, if 
		\begin{equation}  \label{defvonsoluq}
			u\in C\left(I; L^{1+q}(\Omega)\right)\quad \textnormal{and}\quad 
			u^{q}\in L_{loc}^{p}\left(I; W^{1, p}(\Omega)\right)
		\end{equation}
		and (\ref{subdtv}) holds weakly in $\Omega\times I$, which means that for all $t_{1}, t_{2}\in I$ with $t_{1}<t_{2}$, and all non-negative \textit{test functions} 
		\begin{equation}  \label{defvontestsoluq}
			\psi\in W_{loc}^{1, 1+\frac{1}{q}}\left(I;
			L^{1+\frac{1}{q}}(\Omega)\right)\cap L_{loc}^{p}\left(I; W_{0}^{1,
				p}(\Omega)\right),
		\end{equation}
		we have 
		\begin{equation}  \label{defvonweaksolq}
			\left[\int_{\Omega}{u\psi}\right]_{t_{1}}^{t_{2}}+\int_{t_{1}}^{t_{2}}{%
				\int_{\Omega}{-u\partial_{t}\psi+|\nabla u^{q}|^{p-2}\langle\nabla u^{q},
					\nabla \psi\rangle}}\leq 0.
		\end{equation}
	\end{definition}
	
	Existence results for weak solutions of (\ref{dtv}) were obtained in \cite{andreucci1990new, benilan1995strong, coulhon2016regularisation, ivanov1997regularity,  ishige1996existence} in the euclidean setting and in \cite{andreucci2015optimal, de2022wasserstein, surig2026existence} on manifolds.
	
	If $u$ is of the class (\ref{defvonsoluq}) then \(\nabla(u^{q})\) is defined as an element of \(L^{p}(\Omega)\). Then we define \(\nabla u\ \)as follows:\textbf{} 
	\begin{equation*}
		\nabla u:=\left\{ 
		\begin{array}{ll}
			q^{-1}u^{1-q}\nabla(u^{q}), & u>0, \\ 
			0, & u=0.%
		\end{array}%
		\right.
	\end{equation*}

\begin{lemma}\cite{surig2026existence}
	\label{Lem1} Let $u=u\left( x,t\right) $ be a
	non-negative bounded subsolution to \emph{(\ref{dtv})} in a cylinder $\Omega\times (0, T)$.
	Let $\eta \left( x,t\right) $ be a locally Lipschitz non-negative bounded
	function in $\Omega\times [0, T]$ such that $\eta \left( \cdot ,t\right) $ has
	compact support in $\Omega $ for all $t\in [0, T]$. Fix some real $\sigma$
	such that 
	\begin{equation}
		\sigma \geq \max(p, pq)  \label{la>3-m}
	\end{equation}%
	and set 
	\begin{equation}
		\lambda =\sigma +D \ \ \ \ \text{and}\ \ \ \ \ \alpha =\dfrac{\sigma }{p%
		}.  \label{alpha}
	\end{equation}%
	Choose $0\leq t_{1}<t_{2}\leq T$ and set $Q=\Omega \times \left[ t_{1},t_{2}%
	\right] $. Then, for any $\theta_{1}>\theta_{0}>0$,
	\begin{align}
		\left[ \int_{\Omega }(u-\theta_{1})_{+}^{\lambda }\eta ^{p}\right] _{t_{1}}^{t_{2}}&+c_{1}\left(\frac{\theta_{1}}{\theta_{1}-\theta_{0}}\right)^{-(q-1)(p-1)_{-}}
		\int_{Q}\left\vert \nabla \left( (u-\theta_{1})_{+}^{\alpha }\eta \right) \right\vert
		^{p}\\&\leq \int_{Q}\left[ p(u-\theta_{0})_{+}^{\lambda }\eta ^{p-1}\partial _{t}\eta
		+c_{2}\left(\frac{\theta_{1}}{\theta_{1}-\theta_{0}}\right)^{(q-1)(p-1)_{+}}(u-\theta_{0})_{+}^{\sigma }\left\vert \nabla \eta \right\vert ^{p}\right] ,
		\label{veta1}
	\end{align}%
	where $c_{1},c_{2}$ are positive constants depending on $p$, $q$, $\lambda $.
\end{lemma}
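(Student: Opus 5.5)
The plan is to test the weak inequality (\ref{defvonweaksolq}) with
\[
\psi=\lambda (u-\theta_1)_+^{\lambda-1}\eta^p .
\]
This is admissible because $u$ is bounded, $\lambda-1\ge \sigma-1+D>0$, and $\eta(\cdot,t)$ has compact support. The time derivative issue will be handled in the standard way, via Steklov averages or the mollification in time used in \cite{surig2026existence}.

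\textbf{Time term.} Formally
\[
\int u_t \,\lambda (u-\theta_1)_+^{\lambda-1}\eta^p=\frac{d}{dt}\int (u-\theta_1)_+^\lambda\eta^p-\int p(u-\theta_1)_+^\lambda \eta^{p-1}\partial_t\eta .
\]
Integrating over $[t_1,t_2]$ gives the first term on the left of the lemma, plus $p\int_Q (u-\theta_1)_+^\lambda\eta^{p-1}\partial_t\eta$. We then bound $(u-\theta_1)_+^\lambda\le (u-\theta_0)_+^\lambda$ where $\partial_t\eta\ge0$. I expect this is how the $\theta_0$-term arises; where $\partial_t\eta<0$ the term is simply dropped or kept with the favorable sign.

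\textbf{Elliptic term.} On $\{u>\theta_1\}$ we have
\[
\nabla u^q=qu^{q-1}\nabla u,\qquad |\nabla u^q|^{p-2}\nabla u^q\cdot\nabla\psi=q^{p-1}u^{(q-1)(p-1)}\Bigl[\lambda(\lambda-1)v^{\lambda-2}|\nabla u|^p\eta^p+\lambda p v^{\lambda-1}\eta^{p-1}|\nabla u|^{p-2}\nabla u\cdot\nabla\eta\Bigr],
\]
where $v=(u-\theta_1)_+$. The key point is the comparison between $u$ and $v$ on $\{u>\theta_1\}$:
\[
v\le u\le \frac{\theta_1}{\theta_1-\theta_0}\,(u-\theta_0)
\]
(using $u\ge\theta_1$), and also $u\ge v$. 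Hence $u^{(q-1)(p-1)}$ is comparable to $v^{(q-1)(p-1)}$ from one side, and to $(u-\theta_0)^{(q-1)(p-1)}$ from the other, at the cost of the factor $(\theta_1/(\theta_1-\theta_0))^{|(q-1)(p-1)|}$. The sign of $(q-1)(p-1)$ dictates which side, which explains the exponents $(\cdot)_\pm$ in the two constants.

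For the good term we bound $u^{(q-1)(p-1)}$ from below by $v^{(q-1)(p-1)}$ if the exponent is positive, or by $(\frac{\theta_1}{\theta_1-\theta_0})^{(q-1)(p-1)}(u-\theta_0)^{(q-1)(p-1)}\ge(\ldots)^{-(q-1)(p-1)_-}v^{(q-1)(p-1)}$ if it is negative. This gives
\[
\gtrsim \Bigl(\frac{\theta_1}{\theta_1-\theta_0}\Bigr)^{-(q-1)(p-1)_-}\int v^{\lambda-2+(q-1)(p-1)}|\nabla v|^p\eta^p .
\]
Since $\lambda-2+(q-1)(p-1)=\sigma-p$, this is $\simeq|\nabla v^{\alpha}|^p\eta^p$ with $\alpha=\sigma/p$.

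\textbf{Cross term.} We apply Young's inequality,
\[
v^{\lambda-1}|\nabla v|^{p-1}\eta^{p-1}|\nabla\eta|\le \varepsilon v^{\lambda-2}|\nabla v|^p\eta^p+C_\varepsilon v^{\lambda-2+p}|\nabla\eta|^p,
\]
with $\varepsilon$ absorbed into the good term. Here the factor $u^{(q-1)(p-1)}$ must be handled on both pieces. On the absorbed piece it gets the same treatment as before. On the remainder we bound it above: $u^{(q-1)(p-1)}v^{\lambda-2+p}$ is at most $(\frac{\theta_1}{\theta_1-\theta_0})^{(q-1)(p-1)_+}(u-\theta_0)_+^{\sigma}$, using $v\le u-\theta_0$ and, when the exponent is negative, $u\ge v$.

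\textbf{Main obstacle.} The delicate point is making the absorption consistent. Choosing $\varepsilon$ proportional to the reciprocal ratio factor should be harmless: it only moves powers of $\theta_1/(\theta_1-\theta_0)$ between the two constants, with exponents exactly $(q-1)(p-1)_\mp$. Finally, $|\nabla(v^\alpha\eta)|^p\le 2^{p-1}(|\nabla v^\alpha|^p\eta^p+v^\sigma|\nabla\eta|^p)$ converts the gradient term into the stated form, adding another $(u-\theta_0)_+^\sigma|\nabla\eta|^p$ term on the right. The lower bound $\sigma\ge\max(p,pq)$ guarantees that all exponents such as $\lambda-1$ and $\sigma-p$ are nonnegative, so the test function and chain rule are legitimate.
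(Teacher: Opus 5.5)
\textbf{Assessment.} Your scheme does prove the estimate when $m:=(q-1)(p-1)\ge 0$, i.e.\ $q\ge 1$. You test once with $\lambda v^{\lambda-1}\eta^p$, $v=(u-\theta_1)_+$. On $\{v>0\}$ you have $u^m\ge v^m$. With $w=(u-\theta_0)_+$ and $K=\theta_1/(\theta_1-\theta_0)$ you have $u\le Kw$, so $u^m v^{\lambda-2+p}\le K^m w^\sigma$. The absorption you call the main obstacle is harmless if you apply Young's inequality with the common weight $u^m$ before comparing weights.

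\textbf{The gap: $q<1$.} The genuine gap is the case $m<0$, which includes the porous medium case $p=2$, $q<1$ that the paper emphasizes. From $u\le Kw$ you correctly get $u^m\ge K^m w^m$. But $w>v$ and $m<0$ give $w^m<v^m$, so your claimed bound $K^m w^m\ge K^{m}v^m$ goes the wrong way. It cannot be repaired pointwise, since on $\{v>0\}$
\[
u^m v^{\lambda-2}=\Bigl(\frac{v}{u}\Bigr)^{|m|}v^{\sigma-p}\quad\text{and}\quad \frac{v}{u}\to 0\ \text{ as } u\downarrow\theta_1 .
\]
So the dissipation produced by your test function does not control $|\nabla v^{\alpha}|^p$ near the level $\theta_1$, with any power of $K$.

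\textbf{The stated inequality fails for $q<1$.} No argument can give it in this generality. Take a smooth positive solution with $\sup u\le 1+\delta$, $\theta_1=1$, $\theta_0=\frac12$ (so $K=2$). Take a time-independent $\eta$ equal to $1$ on a neighbourhood of $\{u>\frac12\}$ for $t\in[t_1,t_2]$; for instance, the solution of $\partial_t u=\partial_x^2 u^{1/2}$ on $\mathbb{R}$ with data $(1+\delta)e^{-x^2}$, on a short time interval. The right-hand side of the lemma vanishes, while the energy identity gives
\[
\Bigl[\int_\Omega v^\lambda\eta^p\Bigr]_{t_1}^{t_2}=-\lambda(\lambda-1)q^{p-1}\int_Q\Bigl(\frac{v}{u}\Bigr)^{|m|}v^{\sigma-p}|\nabla v|^p\ge-\frac{\lambda(\lambda-1)q^{p-1}\delta^{|m|}}{\alpha^{p}}\int_Q|\nabla v^{\alpha}|^p .
\]
Hence the left-hand side is at least $\bigl(c_1 2^{-|m|}-\lambda(\lambda-1)q^{p-1}\alpha^{-p}\delta^{|m|}\bigr)\int_Q|\nabla v^\alpha|^p>0$ for small $\delta$.

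\textbf{What does hold.} The version with $\eta(\cdot,t_1)\equiv 0$ and $\partial_t\eta\ge 0$ is true; this is the form of the cutoff $\eta_1\eta_2$ in Lemma \ref{Lem2yyy}. For it, test instead at the intermediate level $\theta'=\frac{\theta_0+\theta_1}{2}$, i.e.\ with $\lambda(u-\theta')_+^{\lambda-1}\eta^p$:
\begin{enumerate}
\item the time term at $t_1$ vanishes;
\item the inequalities $v\le(u-\theta')_+\le w$ handle the term at $t_2$ and the $\partial_t\eta$-term;
\item on $\{u>\theta_1\}$ one has $(u-\theta')/u\ge(2K)^{-1}$, hence $u^m(u-\theta')_+^{\lambda-2}\ge(2K)^{-|m|}v^{\sigma-p}$, which gives exactly the factor $K^{-(q-1)(p-1)_-}$ up to a constant;
\item finally $u^m(u-\theta')_+^{\lambda-2+p}\le w^\sigma$.
\end{enumerate}

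\textbf{Separate issue, for every $q$.} Replacing $(u-\theta_1)_+^\lambda$ by $(u-\theta_0)_+^\lambda$ in the $\partial_t\eta$-term needs $\partial_t\eta\ge 0$. Keeping the $\theta_1$-term where $\partial_t\eta<0$, as you propose, does not give the stated right-hand side. The stated form is in fact false there: take $u\equiv\mathrm{const}>\theta_1$ and $\eta$ dropping to $0$ in time over a short interval. So the lemma must be read with $\partial_t\eta\ge 0$.
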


Let us recall for later that \begin{equation}\label{valpha}v^{\alpha}=(u-\theta_{1})_{+}^{\alpha}\in L^{p}\left((0, T); W_{0}^{1, p}(\Omega)\right).\end{equation}
Indeed, using $\alpha\geq q$, we get that the function $\Phi(s)=s^{\frac{\alpha}{q}}$ is Lipschitz on any bounded interval in $[0, \infty)$. Thus, $u^{\alpha}=\Phi(u^{q})\in W_{0}^{1, p}(\Omega)$ and $\left|\nabla u^{\alpha}\right|=\left|\Phi^{\prime}(u^{q})\nabla u^{q}\right|\leq C\left|\nabla u^{q}\right|$, whence \begin{equation}\label{valpha0}\int_{Q}u^{\alpha p}+\left\vert \nabla  u^{\alpha } \right\vert^{p}\leq \left(||u||_{L^{\infty}(Q)}^{\sigma-pq}+C\right) \int_{Q}u^{pq}+\left\vert \nabla  u^{q } \right\vert^{p}.\end{equation}
Therefore, $|\nabla v^{\alpha}|=\alpha v^{\alpha-1}|\nabla v|\leq \alpha u^{\alpha-1}|\nabla u|=|\nabla u^{\alpha}|$, since $\alpha\geq 1$ by (\ref{la>3-m}) and $v\leq u$.
	
	\begin{lemma}
		\label{monl1}\emph{\cite{Grigoryan2024}}
		Let $M$ be geodesically complete and $v=v\left( x,t\right) $ be a bounded non-negative
		subsolution to \emph{(\ref{dtv})} in $M\times I$. For any $\lambda\in[1,\infty]$,  the function 
		\begin{equation*}
			t\mapsto \left\Vert v(\cdot ,t)\right\Vert _{L^{\lambda}(M)}
		\end{equation*}%
		is monotone decreasing in $I$.
	\end{lemma}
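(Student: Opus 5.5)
The plan is to test the weak inequality (\ref{defvonweaksolq}) against a truncated power of $v$ multiplied by a spatial cutoff, and then let the cutoff exhaust $M$, which is possible because $M$ is complete. The case $\lambda=\infty$ follows at the end from the finite cases by letting $\lambda\to\infty$. Throughout, $\|v(\cdot,t)\|_{L^\lambda(M)}$ may be infinite; the claim is then only monotonicity in the extended sense.

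\textbf{Step 1: the test function.} Fix a reference point $o$ and $R>0$. Let $\phi_R(x)=\min\bigl(1,(2-d(o,x)/R)_+\bigr)$; this is Lipschitz with $|\nabla\phi_R|\le 1/R$. Completeness (Hopf--Rinow) makes $\operatorname{supp}\phi_R\subset \overline{B(o,2R)}$ compact. Take
\[
\psi=v^{\lambda-1}\phi_R^{\,s}
\]
with $s\ge p$ large. Since $v$ is bounded, the Lipschitz-function argument used for (\ref{valpha}) lets one check admissibility in (\ref{defvontestsoluq}), at least after replacing $v^{\lambda-1}$ by $(v^{\lambda-1}-\epsilon)_+$ or raising the power slightly, if $\lambda-1<q$ causes trouble near $v=0$. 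The time derivative $\partial_t\psi$ is not available, so I would use Steklov averages in time, which is standard. This gives
\[
\frac1\lambda\Bigl[\int v^\lambda\phi_R^s\Bigr]_{t_1}^{t_2}+\int_{t_1}^{t_2}\!\!\int |\nabla v^q|^{p-2}\langle\nabla v^q,\nabla(v^{\lambda-1}\phi_R^s)\rangle\le0 .
\]
For $\lambda=1$ the test function is simply $\phi_R^s$.

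\textbf{Step 2: the gradient term.} The part where the derivative falls on $v^{\lambda-1}$ equals $(\lambda-1)q^{-1}\,v^{\lambda-1-q}\,|\nabla v^q|^p\,\phi_R^s\ge0$, so it can be dropped. The remaining error term is bounded by
\[
sR^{-1}\int_{t_1}^{t_2}\!\!\int_{B(o,2R)\setminus B(o,R)}|\nabla v^q|^{p-1}v^{\lambda-1}\phi_R^{s-1}.
\]
It must tend to $0$ as $R\to\infty$. When $\lambda>1$, I would instead absorb it into the good term using Young's inequality, leaving an error of order $R^{-p}\int\int v^{\lambda-1+q(p-1)}\phi_R^{s-p}$. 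This tends to zero if $\int v^{\lambda-D}$ is finite on $M\times[t_1,t_2]$. If that is not known, one simply gets $R^{-1}\|\nabla v^q\|_{L^p}^{p-1}$ times the $L^p$ norm over the annulus of a bounded function, where a global $L^p$ bound on $\nabla v^q$ comes from (\ref{defvonsoluq}) with $\Omega=M$. In either case, dominated convergence gives that the error vanishes as $R\to\infty$, using only $v$ bounded and $v^q\in L^p_{loc}(I;W^{1,p}(M))$. The case $\lambda=1$ is handled the same way with Hölder's inequality.

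\textbf{Step 3: the limit in $R$.} The left-hand bracket converges by monotone convergence, which yields $\int v^\lambda(t_2)\le\int v^\lambda(t_1)$. If the norm at $t_1$ is infinite there is nothing to prove. For $\lambda=\infty$, apply the inequality on each large ball, or use $\|v\|_{L^\infty}=\lim_{\lambda\to\infty}\|v\|_{L^\lambda}$ whenever some $L^\lambda$ norm is finite. Otherwise, apply the $L^\lambda$ monotonicity to $(v-k)_+$, which is again a subsolution by Kato-type reasoning, with $k=\|v(t_1)\|_\infty$; this gives $(v-k)_+(t_2)=0$.

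\textbf{Main obstacle.} The step I expect to be hardest is justifying the error term's vanishing in Step 2 without any a priori integrability of $v$ at infinity beyond the weak-solution class. The truncation device, testing with $\bigl((v-\epsilon)_+\bigr)^{\lambda-1}$ to avoid singular powers, must be checked for compatibility with the class (\ref{defvontestsoluq}).
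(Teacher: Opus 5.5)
There is a genuine gap in Step 2, exactly at the point you flag as the main obstacle. Your claim that the cutoff error vanishes ``using only $v$ bounded'' is false. Put $A_R=B(o,2R)\setminus B(o,R)$. Hölder's inequality gives
\[
\frac1R\int_{t_1}^{t_2}\!\!\int_{A_R}|\nabla v^q|^{p-1}v^{\lambda-1}\le\frac1R\,\|\nabla v^q\|_{L^p(A_R\times[t_1,t_2])}^{p-1}\Bigl(\int_{t_1}^{t_2}\!\!\int_{A_R}v^{p(\lambda-1)}\Bigr)^{1/p}.
\]
Boundedness of $v$ only bounds the last factor by $C\mu(B(o,2R))^{1/p}$, and on a complete manifold this can grow much faster than $R$ (e.g.\ exponentially). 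The gradient factor tends to $0$ but with no rate, so nothing forces the product to vanish. There is also no integrable majorant, so dominated convergence does not apply.

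What (\ref{defvonsoluq}) with $\Omega=M$ actually gives is $v\in L^r(M\times[t_1,t_2])$ only for $r\ge\min(1+q,pq)$. Your Young branch needs this with $r=\lambda-D$, and your Hölder branch needs it with $r=p(\lambda-1)$. Both fail for $\lambda$ near $1$; for instance, with $p=2$, $q=\frac12$ they fail for $1\le\lambda<\frac32$. For $\lambda=1$ with $\psi=\phi_R^s$ the error is $R^{-1}\int\int_{A_R}|\nabla v^q|^{p-1}$, which is not controlled at all, since $\{v>0\}$ may be all of $M$. So your argument covers large finite $\lambda$, and $\lambda=\infty$ via the limit $\lambda\to\infty$. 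It does not cover the range near $1$, and in particular not the $L^1$ case.

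The missing idea is to truncate away from zero, so that the boundary error lives on a set of finite measure. Test with $F_\epsilon(v)\phi_R^s$, where $F_\epsilon$ is non-decreasing and Lipschitz, vanishes on $[0,\epsilon]$, and increases to $s^{\lambda-1}$ on $(0,\infty)$ as $\epsilon\downarrow0$. For $\lambda=1$ one can take, e.g., $F_\epsilon(s)=\min\bigl(1,(s-\epsilon)_+/\epsilon\bigr)$. This choice fixes everything at once:
\begin{itemize}
\item $F_\epsilon(v)$ is a Lipschitz function of $v^q$, so admissibility near $v=0$ is automatic.
\item The term containing $F_\epsilon'(v)$ is non-negative and can be dropped.
\item The time term is handled by backward Steklov averages, using convexity of $G_\epsilon(s)=\int_0^sF_\epsilon$.
\item The error is bounded by
\[
\frac{C\|F_\epsilon(v)\|_{\infty}}{R}\,\|\nabla v^q\|_{L^p(M\times[t_1,t_2])}^{p-1}\Bigl(\int_{t_1}^{t_2}\mu\{v(\cdot,t)>\epsilon\}\,dt\Bigr)^{1/p}\longrightarrow0 ,
\]
because Chebyshev and $v\in C(I;L^{1+q}(M))$ give $\mu\{v(\cdot,t)>\epsilon\}\le\epsilon^{-1-q}\sup_{[t_1,t_2]}\|v(\cdot,t)\|_{L^{1+q}(M)}^{1+q}<\infty$.
\end{itemize}
Letting $R\to\infty$ and then $\epsilon\to0$, with monotone convergence and $G_\epsilon(s)\uparrow s^\lambda/\lambda$, gives the claim.

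For $\lambda>1$ your own $(v^{\lambda-1}-\epsilon)_+$ already has this property, but you used it only for local regularity and not in Step 2. For $\lambda=1$ it degenerates to $(1-\epsilon)\mathbf{1}_{\{v>0\}}$, so a different truncation is needed there.

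Finally, your assertion that $(v-k)_+$ is again a subsolution is false for $q\neq1$, since $\nabla\bigl((v-k)_+^q\bigr)\neq\nabla v^q$ on $\{v>k\}$. That branch is not needed, though: $v(\cdot,t)\in L^{1+q}(M)$ always, so $\|v(\cdot,t)\|_{L^\lambda}\to\|v(\cdot,t)\|_{L^\infty}$.
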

	
	\section{Faber-Krahn inequality}\label{appFk}
	Let $M$ be a connected Riemannian manifold of dimension $n$ and $d$ be the geodesic distance on $M$. For any $%
	x\in M$ and $r>0$, denote by $B(x,r)$ the geodesic ball of radius $r$
	centered at $x$, that is,%
	\begin{equation*}
		B(x,r)=\left\{ y\in M:d(x,y)<r\right\} .
	\end{equation*}
	
	Let the ball $B$ be precompact.
	Then the following \textit{Faber-Krahn inequality} in $B$
	of order $p\geq 1$ holds: if $w\in {W}_{0}^{1,p}(B)$ is non-negative,
	$$E=\left\{ w>0\right\}$$ and $r(B)$ denotes the radius of the ball $B$,
	then%
	\begin{equation}
		\dint_{B}\left\vert \nabla w\right\vert ^{p}\geq \dfrac{1}{r(B)^{p}}\left( \iota (B)\dfrac{\mu (B)}{\mu (E)}\right) ^{\nu
		}\dint_{B}w^{p},   \label{FKp}
	\end{equation}%
	where $\nu>0$ and $\iota(B)$ is a positive constant that depends on the geometry of $B$. The value of $\nu$ is independent of $B$ and can be chosen as follows:
	\begin{equation}\label{nu}
		\nu =\left\{ 
		\begin{array}{ll}
			\dfrac{p}{n}, & \text{if }n>p, \\ 
			\text{any number}\in (0, 1), & \text{if }n\leq p.%
		\end{array}%
		\right.
	\end{equation} 
	
	Choosing $\iota (B)$ to be an optimal constant in (\ref{FKp}) we obtain that the function%
	\begin{equation}
		B\mapsto \frac{\left( \iota (B)\mu (B)\right) ^{\nu }}{r(B)^{p}}
		\label{cmu}
	\end{equation}%
	is monotone decreasing with respect to the partial order $\subset $ on
	balls.
	
	We say that $M$ satisfies a \textit{relative Faber-Krahn inequality} of order $p$ if (\ref{FKp}) holds with $\iota(B)\geq \textnormal{const}>0$ for all geodesic balls $B$. For example, this holds if $M$ is complete, non-compact and satisfies $Ricci_{M}\geq 0$ (see \cite{Buser, grigor, Saloff}).
	
	\section{Long distance decay}\label{seclongdis}
	
			From now on we always assume that
	\begin{equation}\label{signconpq}
		D=1-q(p-1)>0.
	\end{equation}
	
	\begin{lemma}\label{specMV}
		Let the ball $B=B\left( x_{0},R\right) $ be precompact. Let $u$
		be a non-negative bounded subsolution in 
	$Q=B\times \left[ 0,t\right]$
		such that 
		\begin{equation*}
			u\left( \cdot ,0\right) =0\ \text{in }B.
		\end{equation*}%
		Let $\sigma$ and $\lambda$ be reals such that \begin{equation}\label{silapos}\sigma>0\quad \textnormal{and}\quad \lambda=\sigma+D.\end{equation}
		Then, for the cylinder 
	$Q^{\prime }=\frac{1}{2}B\times \left[ 0,t\right] ,$
		we have%
		\begin{equation}
			\left\Vert u\right\Vert _{L^{\infty }\left( Q^{\prime }\right) }\leq \left( 
			\frac{C}{\iota (B)\mu (B)R^{p}}\int_{Q}u^{\sigma}\right) ^{1/\lambda
			},  \label{mean}
		\end{equation}%
		where $\iota (B)$ is the Faber-Krahn constant in $B$,
		and the constant $C$ depends on $p$, $q$, $\lambda $ and the Faber-Krahn exponent $\nu$.
	\end{lemma}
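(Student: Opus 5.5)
We prove (\ref{mean}) by a De Giorgi iteration built on the Caccioppoli inequality of Lemma \ref{Lem1} and the Faber-Krahn inequality (\ref{FKp}). Since $u(\cdot,0)=0$ in $B$, we may use time-independent cut-off functions, so the term with $\partial_t\eta$ disappears. Also, the boundary term at $t_1=0$ vanishes because $(u-\theta_1)_+(\cdot,0)=0$. This is why no time-shrinking of the cylinder is needed and why only the factor $R^{-p}$ (and no $\|u\|^D/T$ term) appears.

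First assume $\sigma\ge\max(p,pq)$, so that Lemma \ref{Lem1} applies directly. Fix $\theta>0$ and set
\[
r_k=\tfrac R2\bigl(1+2^{-k}\bigr),\qquad B_k=B(x_0,r_k),\qquad \theta_k=(1-2^{-k-1})\theta,
\]
together with $u_k=(u-\theta_k)_+$, $Q_k=B_k\times[0,t]$, and cut-offs $\eta_k$ equal to $1$ on $B_{k+1}$, supported in $B_k$, with $|\nabla\eta_k|\le C2^k/R$. Apply Lemma \ref{Lem1} with $\theta_1=\theta_{k+1}$, $\theta_0=\theta_k$, on $[0,s]$ for each $s\le t$. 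The ratio $\theta_{1}/(\theta_1-\theta_0)$ is at most $C2^k$, so the powers of it contribute only geometric factors $A^k$. This gives
\[
\sup_{s\le t}\int_{B_{k+1}}u_{k+1}^{\lambda}(s)+\int_{Q_{k+1}}|\nabla(u_{k+1}^{\sigma/p}\eta_k)|^p\le \frac{C A^k}{R^p}\int_{Q_k}u_k^{\sigma}.
\]
Next apply (\ref{FKp}) in $B$ (using the monotonicity (\ref{cmu}), which is harmless here since all balls are comparable to $B$) to $w=u_{k+1}^{\sigma/p}\eta_k$ for each time. For the measure of its support use Chebyshev's inequality, $\mu\{u_{k+1}>0\}\cap B_k\le (\theta 2^{-k-2})^{-\sigma}\int_{B_k}u_k^\sigma$. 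Then integrate in time and use the $\sup_s\int u_{k+1}^\lambda$ bound in the standard way: Hölder interpolation between $L^\sigma$ in space-time and $L^\lambda$ uniformly in time, with the gain $\lambda>\sigma$ since $D>0$. One obtains a nonlinear recursion for $J_k=\int_{Q_k}u_k^\sigma$ of the form
\[
J_{k+1}\le \frac{C A^k}{\theta^{\kappa}}\left(\frac{1}{\iota(B)\mu(B)R^p}\right)^{\nu/(1+\nu)\cdot(\cdots)}J_k^{1+\delta},\qquad \delta>0.
\]
The exponents are fixed by scaling: $\kappa$ and the power of $(\iota\mu R^p)$ must be such that the smallness condition $J_0\le c\,\theta^{\lambda}\iota(B)\mu(B)R^p$ forces $J_k\to0$. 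The standard lemma on fast geometric convergence then gives $u\le\theta$ on $Q'$. Choosing $\theta$ to make the condition an equality yields (\ref{mean}) for large $\sigma$.

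To reach arbitrary $\sigma>0$, use the usual bootstrap. For $\sigma<\sigma_0:=\max(p,pq)$, write the large-$\sigma_0$ estimate on balls $B(y,\rho)\subset B$ with $y\in\frac12B$. In it, bound $\int u^{\sigma_0}\le \|u\|_{L^\infty}^{\sigma_0-\sigma}\int u^\sigma$ on a slightly larger cylinder. Then use Young's inequality, which gives a term $\tfrac12\|u\|_{L^\infty(B_{r'}\times[0,t])}$ plus the desired term with radius factor $(r'-r)^{-p}$ and exponent $1/\lambda$. Finally, absorb the first term by the standard iteration lemma over radii $r\in[R/2,R)$. The Faber-Krahn constant transfers from $B$ to the smaller balls via (\ref{cmu}), at the cost of constants.

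The main obstacle is the exponent bookkeeping in the nonlinear recursion, namely combining Faber-Krahn with the $L^\lambda$ supremum so that the final exponent is exactly $1/\lambda$ with the factor $(\iota\mu R^p)^{-1}$. I would fix this by a dimensional/homogeneity check: $u\mapsto cu$, $t\mapsto c^{-D}t$ preserves subsolutions. Along with the reduction from $\sigma_0$ to small $\sigma$, this check determines the form of (\ref{mean}).
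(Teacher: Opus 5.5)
Your proposal is correct and has the same two-stage structure as the paper. The first stage is a De Giorgi iteration with time-independent cut-offs for $\sigma\ge\max(p,pq)$. The second is a reduction to small $\sigma$ through $\int u^{\sigma_0}\le\|u\|_{L^\infty}^{\sigma_0-\sigma}\int u^{\sigma}$ on balls $B(y,r'-r)$, with $\iota$ transferred via (\ref{cmu}).

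For the first stage the paper simply cites \cite{surig2026existence}. Your version is the argument of Lemmas \ref{Lem2yyy}--\ref{Tmeansing} with $\partial_t\eta=0$, so $S$ collapses to $R^{-p}$. Your shift $\theta_k=(1-2^{-k-1})\theta$ rightly keeps $\theta_0>0$, as Lemma \ref{Lem1} requires. The exponents you left as $(\cdots)$ need no scaling heuristic. Use Chebyshev with exponent $\lambda$ together with the sup-in-time bound, taken at an intermediate level and radius as in Lemma \ref{Lem2yyy}. This gives directly $J_{k+1}\le CA^k\bigl(\iota(B)\mu(B)R^p\theta^{\lambda}\bigr)^{-\nu}J_k^{1+\nu}$.

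In the second stage the real difference is how the inequality
$M(r)^{\lambda_0}\le K\,(r'-r)^{-p(1+\nu^{-1})}M(r')^{\lambda_0-\lambda}$
is solved. Here $M(r)=\|u\|_{L^\infty(B(x_0,r)\times[0,t])}$ and $K=C(\sigma_0)R^{p/\nu}\bigl(\iota(B)\mu(B)\bigr)^{-1}\int_Q u^\sigma$. Note that the $(R/\rho)^{p/\nu}$ loss from (\ref{cmu}) is an extra power of $(r'-r)^{-1}$, not merely a constant. You linearize by Young's inequality and then use the absorption lemma over radii. The paper instead runs the nonlinear recursion for $\|u\|_{L^\infty(B_k^t)}^{-(\lambda_0-\lambda)}$ and solves it with Lemma 5.2 of \cite{Grigoryan2024}.

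Both routes prove the statement as written. The paper's route yields the explicit constant $A^{(\lambda_0-\lambda)/\lambda}C(\sigma_0)$, whose limit as $\sigma\to0+$ can be read off immediately (Remark \ref{constantsigma0}). That is the actual reason the lemma is reproved, since Lemma \ref{TFPSqy} sends $\sigma\to0+$. Your route also gives a constant that stays bounded as $\sigma\to0+$. This holds because $\lambda\ge D>0$ keeps the Young exponents $\lambda_0/\lambda$ and $\lambda_0/(\lambda_0-\lambda)$, and the absorption exponent $p(1+\nu^{-1})/\lambda$, bounded. You would, however, have to state this explicitly for the later application.
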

	
	\FRAME{ftbpF}{2.1392in}{1.6447in}{0pt}{\Qcb{Cylinders $Q$ and $Q^{\prime }$}}{\Qlb{pic3}}{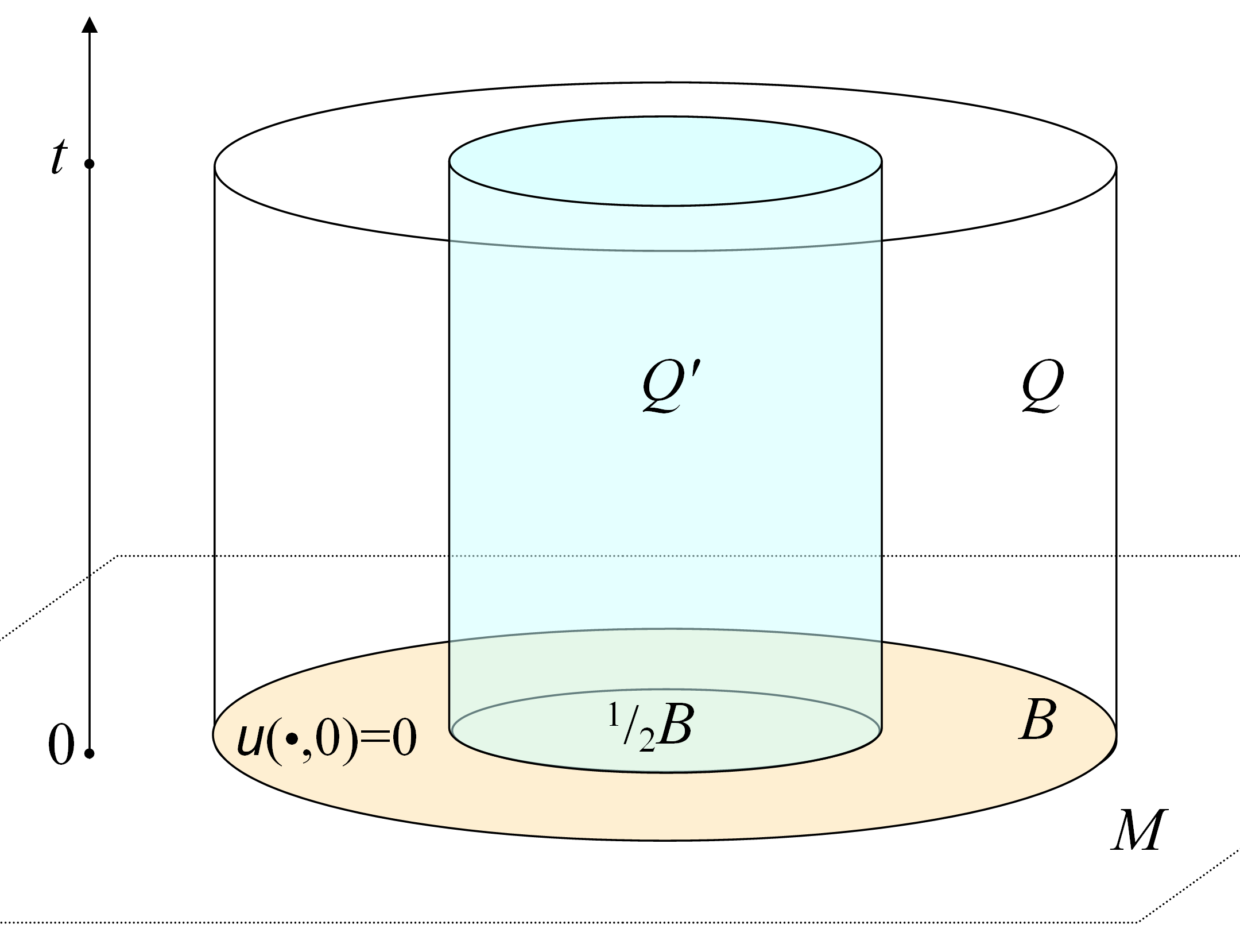}{\special%
		{language "Scientific Word";type "GRAPHIC";maintain-aspect-ratio
			TRUE;display "USEDEF";valid_file "F";width 2.1392in;height 1.6447in;depth
			0pt;original-width 10.0838in;original-height 7.7406in;cropleft "0";croptop
			"1";cropright "1";cropbottom "0";filename 'pic3.png';file-properties
			"XNPEU";}}
		
	\begin{remark}
	In \cite{Grigoryan2024a} Lemma \ref{specMV} was taken from \cite{Grigoryan2024} where it was proved under the additional condition $$p<2\quad\textnormal{and}\quad q\geq 1.$$
	\end{remark}	

\begin{proof}
This Lemma is already proved in \cite{surig2026existence}. However, since the convergence of the constant $C$ in (\ref{mean}) for $\sigma\to 0+$ is not adressed there and we need this for later usage, we give a shortened proof here.

It was shown in the first part of the proof of Lemma 4.5 in \cite{surig2026existence} that for any large enough $\sigma$, say $\sigma\geq\sigma_{0}$, with $\sigma_{0}$ only depending on $p$ and $q$, that \begin{equation}\label{firstparttaken}\left\Vert u\right\Vert _{L^{\infty }\left( Q^{\prime }\right) }\leq \left( 
\frac{C}{\iota (B)\mu (B)R^{p}}\int_{Q}u^{\sigma}\right) ^{1/\lambda
},\end{equation} where $\lambda=\sigma+D$ and $C$ depends on $p, q, \nu $ and $\sigma$. 

Now we prove (\ref{firstparttaken}) for any $\sigma>0$ (cf. \cite{Grigoryan2024}). Let $\sigma <\sigma_{0}$ and denote 
\begin{equation*}
	\lambda_{0}=\sigma_{0}+D\ \ \ \text{and\ \ }\lambda=\sigma+D
\end{equation*}%
so that $\lambda<\lambda_{0}.$

For simplicity of notation, for any set $E\subset M$, denote
$E^{t}=E\times \left[ 0,t\right]$.

In particular, (\ref{firstparttaken}) implies that, for any precompact ball $B$ of
radius $R$, 
\begin{equation}\label{sigma0}
	\left\Vert u\right\Vert _{L^{\infty }(\frac{1}{2}B^{t})}^{\lambda _{0}}\leq 
	\frac{C(\sigma_{0})}{\chi (B)R^{p}}\int_{B^{t}}u^{\sigma _{0}},
\end{equation}%
where $C(\sigma_{0})$ depends on $p, q, \nu$ and $\sigma_{0}$, 
$\chi (B):=\iota (B)\mu (B)$ and according to our notation $B^{t}=B\times [0, t]$.
Consider for $k\geq 0$, a sequence 
$$R_{k}=\left(1-\frac{1}{2^{k+1}}\right)R,$$
so that $R_{0}=\frac{1}{2}R$ and $R_{k}\uparrow R$ as $k\rightarrow \infty $%
, and set 
$B_{k}=B(x_{0},R_{k})$.
Denoting also $B=B(x_{0},R),$ we see that%
\begin{equation*}
	\frac{1}{2}B\subset B_{k}\subset B\ \ \text{and\ \ }B_{k}\uparrow B
\end{equation*}
as $k\rightarrow \infty .$ Set also 
$\rho _{k}=R_{k+1}-R_{k}=\frac{1}{2^{k+2}}R$.
\FRAME{ftbphF}{2.1662in}{1.8558in}{0pt}{\Qcb{Balls $B_{k}$ and $B(x, \rho_{k})$}}{\Qlb{pic8}}{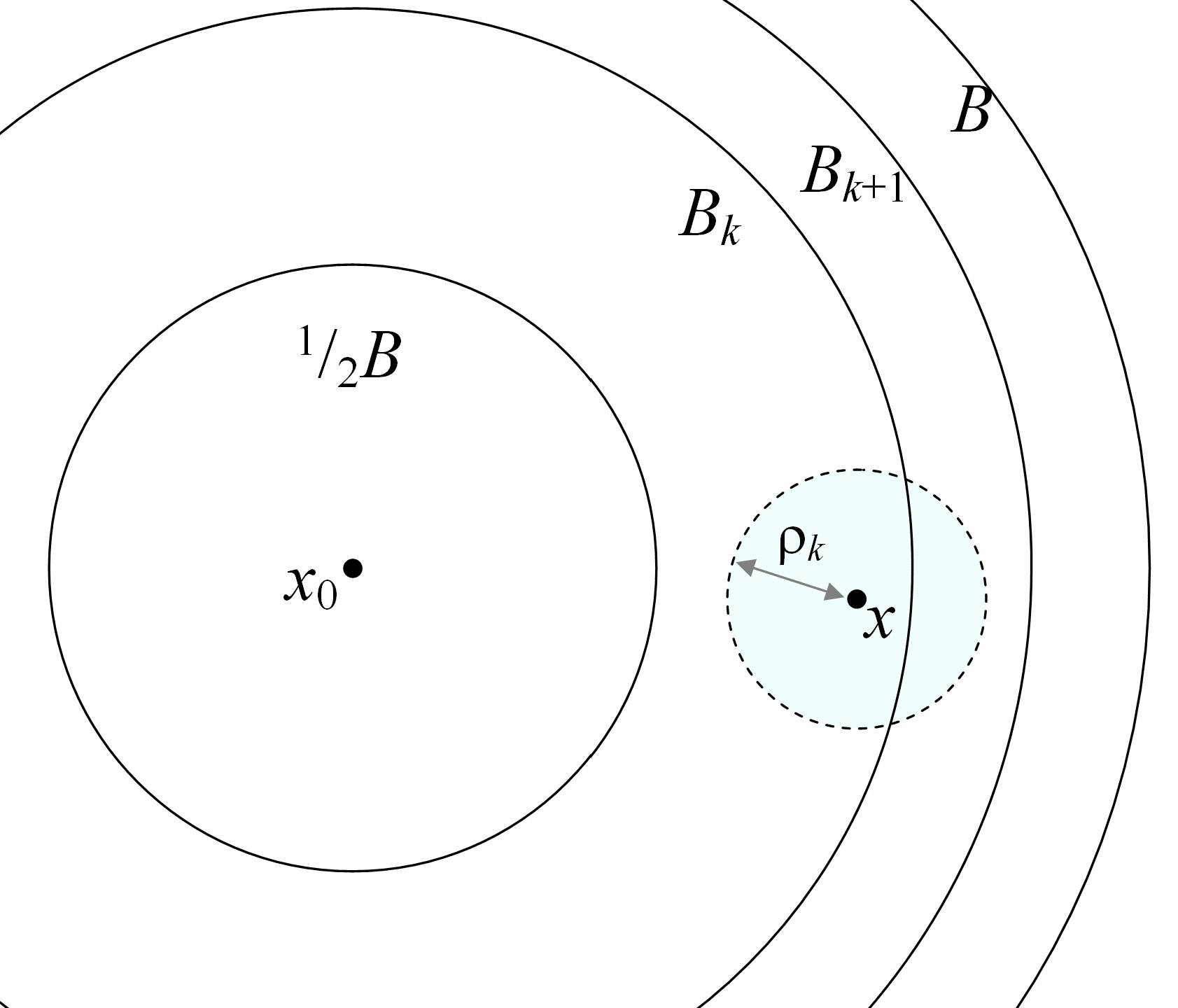}{\special%
	{language "Scientific Word";type "GRAPHIC";maintain-aspect-ratio
		TRUE;display "USEDEF";valid_file "F";width 2.1662in;height 1.8558in;depth
		0pt;original-width 7.8574in;original-height 6.7229in;cropleft "0";croptop
		"1";cropright "1";cropbottom "0";filename 'pic8.png';file-properties
		"XNPEU";}}

For any point $x\in B_{k}$, applying (\ref{sigma0}) in the ball $%
B\left( x,\rho _{k}\right) $, we obtain%
\begin{align*}
	\left\Vert u\right\Vert _{L^{\infty }(B^{t}(x,\frac{1}{2}\rho
		_{k}))}^{\lambda _{0}}& \leq \frac{C(\sigma_{0})}{\chi \left( B(x,\rho _{k})\right) \rho
		_{k}^{p}}\int_{B^{t}(x,\rho _{k})}u^{\sigma _{0}} \\
	& \leq \frac{C(\sigma_{0})}{\chi \left( B(x,\rho _{k})\right) \rho _{k}^{p}}\left\Vert
	u\right\Vert _{L^{\infty }(B^{t}(x,\rho _{k}))}^{\sigma _{0}-\sigma
	}\int_{B^{t}(x,\rho _{k})}u^{\sigma }.
\end{align*}%
Since 
$B\left( x,\rho _{k}\right) \subset B_{k+1}\subset B$,
we have by the monotonicity of (\ref{cmu}) 
\begin{equation*}
	\frac{\chi (B(x,\rho _{k}))}{\rho _{k}^{p/\nu }}\geq \frac{\chi \left(
		B\right) }{R^{p/\nu }}
\end{equation*}%
whence%
\begin{equation*}
	\frac{1}{\chi (B(x,\rho _{k}))}\leq \frac{\left( R/\rho _{k}\right)
		^{p/\nu }}{\chi (B)}=\frac{2^{\left( k+2\right) p/\nu }}{\chi (B)}.
\end{equation*}%
Hence, we obtain 
\begin{equation*}
	\left\Vert u\right\Vert _{L^{\infty }(B^{t}(x,\frac{1}{2}\rho
		_{k}))}^{\lambda _{0}}\leq \frac{C(\sigma_{0})2^{kp(\nu ^{-1}+1)}}{\chi \left( B\right)
		r^{p}}\left\Vert u\right\Vert _{L^{\infty }(B_{k+1}^{t})}^{\lambda
		_{0}-\lambda }\int_{B^{t}}u^{\sigma }.
\end{equation*}%
Covering $B_{k}$ by a sequence of balls $B(x,\frac{1}{2}\rho _{k})$ with $%
x\in B_{k}$, we obtain%
\begin{equation}
	\left\Vert u\right\Vert _{L^{\infty }(B_{k}^{t})}^{\lambda _{0}}\leq \frac{%
		C(\sigma_{0})2^{kp(\nu ^{-1}+1)}}{\chi \left( B\right) R^{p}}\left\Vert u\right\Vert
	_{L^{\infty }(B_{k+1}^{t})}^{\lambda _{0}-\lambda }\int_{B^{t}}u^{\sigma }.
	\label{Bkt}
\end{equation}%
Setting 
$J_{k}=\left\Vert u\right\Vert _{L^{\infty }(B_{k}^{t})}^{-\left( \lambda
	_{0}-\lambda \right) },$        
we rewrite (\ref{Bkt}) as follows:%
\begin{equation*}
	J_{k+1}\leq \frac{A^{k}}{\Theta }J_{k}^{\frac{\lambda _{0}}{\lambda
			_{0}-\lambda }}=\frac{A^{k}}{\Theta }J_{k}^{1+\omega },
\end{equation*}%
where
\begin{equation*}
A =2^{p(\nu ^{-1}+1)}, \quad \Theta ^{-1} =\frac{C(\sigma_{0})}{\chi \left( B\right) R^{p}}\int_{B^{t}}u^{\sigma }\quad \textnormal{and}\quad\omega =\frac{\lambda _{0}}{\lambda _{0}-\lambda }-1=\frac{\lambda }{%
	\lambda _{0}-\lambda }.	
\end{equation*} 
Applying Lemma 5.2 from \cite{Grigoryan2024}, we obtain%
\begin{equation*}
	J_{k}\leq \left( \frac{J_{0}}{\left( A^{-1/\omega }\Theta \right) ^{1/\omega
	}}\right) ^{\left( 1+\omega \right) ^{k}}\left( A^{-1/\omega }\Theta \right)
	^{1/\omega },
\end{equation*}%
that is, 
\begin{equation*}
	J_{0}\geq \left( A^{-1/\omega }\Theta \right) ^{1/\omega }\left( \left(
	A^{1/\omega }\Theta ^{-1}\right) ^{1/\omega }J_{k}\right) ^{\frac{1}{\left(
			1+\omega \right) ^{k}}}.
\end{equation*}%
Since 
$J_{k}\geq \left\Vert u\right\Vert _{L^{\infty }(B^{t})}^{-\left( \lambda
	_{0}-\lambda \right) }>\func{const}>0$,
we see that%
\begin{equation*}
	\liminf_{k\rightarrow \infty }\left( \left( A^{1/\omega }\Theta ^{-1}\right)
	^{1/\omega }J_{k}\right) ^{\frac{1}{\left( 1+\omega \right) ^{k}}}\geq 1,
\end{equation*}%
whence%
\begin{equation*}
	J_{0}\geq \left( A^{-1/\omega }\Theta \right) ^{1/\omega }.
\end{equation*}%
It follows that
$J_{0}^{-1}\leq A^{1/\omega ^{2}}\Theta ^{-1/\omega }$,
that is,%
\begin{equation*}
	\left\Vert u\right\Vert _{L^{\infty }(B_{0}^{t})}^{\lambda _{0}-\lambda
	}\leq A^{1/\omega ^{2}}\left( \frac{C(\sigma_{0})}{\chi \left( B\right) R^{p}}%
	\int_{B^{t}}u^{\sigma }\right) ^{1/\omega },
\end{equation*} and thus,
\begin{equation}\label{withconstant0}
	\left\Vert u\right\Vert _{L^{\infty }(\frac{1}{2}B\times \left[ 0,t\right]
		)}\leq \left( \frac{A^{\frac{\lambda_{0}-\lambda}{\lambda}}C(\sigma_{0})}{\iota (B)\mu (B)R^{p}}\int_{B\times \left[ 0,t\right]
	}u^{\sigma }\right) ^{1/\lambda },
\end{equation}%
which proves (\ref{mean}) with $C=A^{\frac{\lambda_{0}-\lambda}{\lambda}}C(\sigma_{0})$.
\end{proof}

\begin{remark}\label{constantsigma0}
Using the notation from the proof of Lemma \ref{specMV}, we see from (\ref{withconstant0}) that the constant $C=A^{\frac{\lambda_{0}-\lambda}{\lambda}}C(\sigma_{0})$ in (\ref{mean}) converges to $A^{\frac{\sigma_{0}}{D}}C(\sigma_{0})$ as $\sigma\to 0+$. Hence, the limit of $C$ as $\sigma\to 0+$ is a positive constant that depends only on $p, q$ and the Faber-Krahn exponent $\nu$.		
\end{remark}	
	
	The next lemma is the main result of this section. 
	\begin{lemma}
		\label{TFPSqy}Assume that $M$ is geodesically complete and let $u$ be a bounded non-negative subsolution in $M\times %
		\left[ 0,T\right] $. Let $B=B\left( x_{0},R\right) $ be a ball such that 
		\begin{equation*}
			u_{0}=u(\cdot, 0)=0 \text{\ in\ }B.
		\end{equation*}%
		Then, for all $t\in [0, T]$,
		\begin{equation}\label{optlongdist}
			\left\Vert u\right\Vert _{L^{\infty }\left( \frac{1}{2}B\times \left[ 0,t%
				\right] \right) }\leq C\left( \frac{t}{\iota (B)R^{p}}\right) ^{\frac{1}{D }},
		\end{equation}%
		where the positive constant $C$ depends on $p, q$ and the Faber-Krahn exponent $\nu$.
	\end{lemma}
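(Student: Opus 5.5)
The plan is to apply the mean value inequality of Lemma \ref{specMV} on the cylinder $Q=B\times[0,t]$ with an arbitrary small exponent $\sigma>0$, and then let $\sigma\to0+$. Since $M$ is geodesically complete, every ball is precompact, so Lemma \ref{specMV} applies to $B=B(x_0,R)$. The hypothesis $u(\cdot,0)=0$ in $B$ is exactly the one required there. We may assume $t>0$ and that $u\not\equiv0$ on $Q$; otherwise (\ref{optlongdist}) is trivial.

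For $\sigma>0$ and $\lambda=\sigma+D$, Lemma \ref{specMV} gives
\begin{equation*}
\left\Vert u\right\Vert_{L^{\infty}(\frac12 B\times[0,t])}^{\sigma+D}\leq \frac{C_\sigma}{\iota(B)\mu(B)R^{p}}\int_{B\times[0,t]}u^{\sigma}.
\end{equation*}
The integral on the right is estimated crudely by
\begin{equation*}
\int_{B\times[0,t]}u^{\sigma}\leq \mu(B)\,t\,\left\Vert u\right\Vert_{L^{\infty}(Q)}^{\sigma}.
\end{equation*}
Inserting this bound, the factor $\mu(B)$ cancels and we obtain
\begin{equation*}
\left\Vert u\right\Vert_{L^{\infty}(\frac12 B\times[0,t])}^{\sigma+D}\leq C_\sigma\,\frac{t}{\iota(B)R^{p}}\,\left\Vert u\right\Vert_{L^{\infty}(Q)}^{\sigma}.
\end{equation*}

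Next we let $\sigma\to0+$. Because $u$ is bounded and not identically zero on $Q$, we have $0<\left\Vert u\right\Vert_{L^{\infty}(Q)}<\infty$, so $\left\Vert u\right\Vert_{L^{\infty}(Q)}^{\sigma}\to1$. Also $\left\Vert u\right\Vert_{L^{\infty}(\frac12B\times[0,t])}^{\sigma+D}\to\left\Vert u\right\Vert_{L^{\infty}(\frac12B\times[0,t])}^{D}$. By Remark \ref{constantsigma0}, $C_\sigma=A^{\frac{\lambda_0-\lambda}{\lambda}}C(\sigma_0)$ converges to the finite constant $C_0=A^{\sigma_0/D}C(\sigma_0)$, which depends only on $p$, $q$ and $\nu$. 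Passing to the limit yields
\begin{equation*}
\left\Vert u\right\Vert_{L^{\infty}(\frac12 B\times[0,t])}^{D}\leq C_0\,\frac{t}{\iota(B)R^{p}}.
\end{equation*}
Taking the $1/D$-th power (recall $D>0$) gives (\ref{optlongdist}) with $C=C_0^{1/D}$.

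The only delicate point is keeping the constant in (\ref{mean}) bounded as $\sigma\to0+$. A naive bound would blow up, and in any case would not be independent of $\sigma$. This is exactly why the paper re-derives Lemma \ref{specMV} with explicit dependence of the constant and records its limit in Remark \ref{constantsigma0}; with that in hand, the argument above is a direct limiting procedure. Note also that, unlike the earlier estimate from \cite{Grigoryan2024a}, no $L^1$ norm of $u_0$ enters: the crude bound $u^\sigma\le\|u\|_\infty^\sigma$ loses only a factor tending to $1$, which removes the logarithmic term.
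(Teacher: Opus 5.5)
Your proof is correct and follows the paper's argument: apply Lemma~\ref{specMV} on $B\times[0,t]$ with small $\sigma>0$, bound $\int_{Q}u^{\sigma}$ crudely by $t\,\mu(B)$ times a sup-norm to the power $\sigma$, and let $\sigma\to0+$ using the limit of the constant from Remark~\ref{constantsigma0}. The only difference is that you use $\|u\|_{L^\infty(Q)}^{\sigma}$ where the paper invokes the monotonicity Lemma~\ref{monl1} to get $\|u_0\|_{L^\infty(M)}^{\sigma}$; this is legitimate, since either finite positive factor tends to $1$ as $\sigma\to0+$, so the appeal to monotonicity is not actually needed.
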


\begin{remark}
In \cite{Grigoryan2024a} the estimate (\ref{optlongdist}) was proved with additional factor $$\ln^{\gamma}\left(2+\left( \frac{\iota (B)R^{p}}{t}\right) ^{\frac{1}{D }}\frac{||u_{0}||_{L^{1}(M)}}{\mu \left(B\right) }\right),$$ where $\gamma$ is a positive constant.
\end{remark}
	
	\begin{proof}
Let $Q=B\times \left[ 0,t\right]$ and $Q^{\prime }=\frac{1}{2}B\times \left[ 0,t\right]$.
Then it follows from Lemma \ref{specMV} with $$\sigma>0\quad \textnormal{and}\quad \lambda=\sigma+D$$ and the monotonicity of $t\mapsto ||u(\cdot, t)||_{L^{\infty}(M)}$ from Lemma \ref{monl1} that \begin{align}
\left\Vert u\right\Vert _{L^{\infty }\left( Q^{\prime }\right) }&\nonumber\leq \left( 
\frac{C}{\iota (B)\mu (B)r^{p}}\int_{Q}u^{\sigma}\right) ^{1/\lambda
}\\&\nonumber\leq \left( 
\frac{C}{\iota (B)\mu (B)R^{p}}t\mu(B)||u_{0}||_{L^{\infty}(M)}^{\sigma}\right) ^{1/\lambda}\\&\label{sigmato0}=\left(\frac{Ct}{\iota (B)R^{p}}||u_{0}||_{L^{\infty}(M)}^{\sigma}\right) ^{1/\lambda}.
\end{align}
By Remark \ref{constantsigma0} the constant $C$ converges to some positive constant that depends only on $p, q$ and the Faber-Krahn exponent $\nu$ when $\sigma\to 0+$. Therefore, sending $\sigma\to 0+$ in (\ref{sigmato0}) we conclude the claim.\end{proof}
	
	\section{Long time decay}\label{longtimedecsing}
	The main result of this section is Lemma \ref{Tlong}.
\subsection{Comparison in two cylinders}

\begin{lemma}
	\label{Lem2yyy}Consider two balls $B_{0}=B\left( x_{0},r_{0}\right) $ and $%
	B_{1}=B\left( x_{0},r_{1}\right) $ with $0<r_{1}<r_{0}$ where $B_{0}$ is
	precompact. Assuming $0<t_{0}<t_{1}<T$, consider two cylinders $%
	Q_{i}=B_{i}\times \lbrack t_{i},T]$, $i=0,1.$ 
	Let $v_{0}$ be non-negative bounded subsolution in $Q_{0}.$ For  $\theta_{1}>\theta_{0} >0$  set
	\begin{equation*}
		v_{i}=\left( u-\theta_{i} \right) _{+}.
	\end{equation*}%
	Let $\sigma $ and $\lambda $ be reals satisfying (\ref{la>3-m}) and (\ref{alpha}).
	Set 
	\begin{equation*}
		J_{i}=\int_{Q_{i}}v_{i}^{\sigma }d\mu dt.
	\end{equation*}%
	Then%
	\begin{equation}\label{compsingsp1}
		J_{1}\leq \frac{Cr_{0}^{p}S^{\nu}}{\left( \iota (B_{0})\mu (B_{0})\right) ^{\nu}(\theta_{1}-\theta_{0})^{\lambda \nu}\left( r_{0}-r_{1}\right) ^{p}}\left(\frac{\theta_{1}}{\theta_{1}-\theta_{0}}\right)^{|q-1|(p-1)}J_{0} ^{1+\nu},
	\end{equation}%
	where 
	\begin{equation*}
		S=\frac{\left\Vert v_{0}\right\Vert _{L^{\infty }(Q_{0})}^{D}}{t_{1}-t_{0}}+\left(\frac{\theta_{1}}{\theta_{1}-\theta_{0}}\right)^{(q-1)(p-1)_{+}}
		\frac{1}{\left( r_{0}-r_{1}\right) ^{p}},
	\end{equation*}%
	$\nu $ is the Faber-Krahn exponent, $\iota (B_{0})$ is the Faber-Krahn
	constant in $B_{0}$, and $C$ depends on $p$, $q$ and $\lambda $.
	\FRAME{ftbphF}{2.2312in}{1.7123in}{0pt}{\Qcb{Cylinders $Q_{0}$ and $Q_{1 }$}}{}{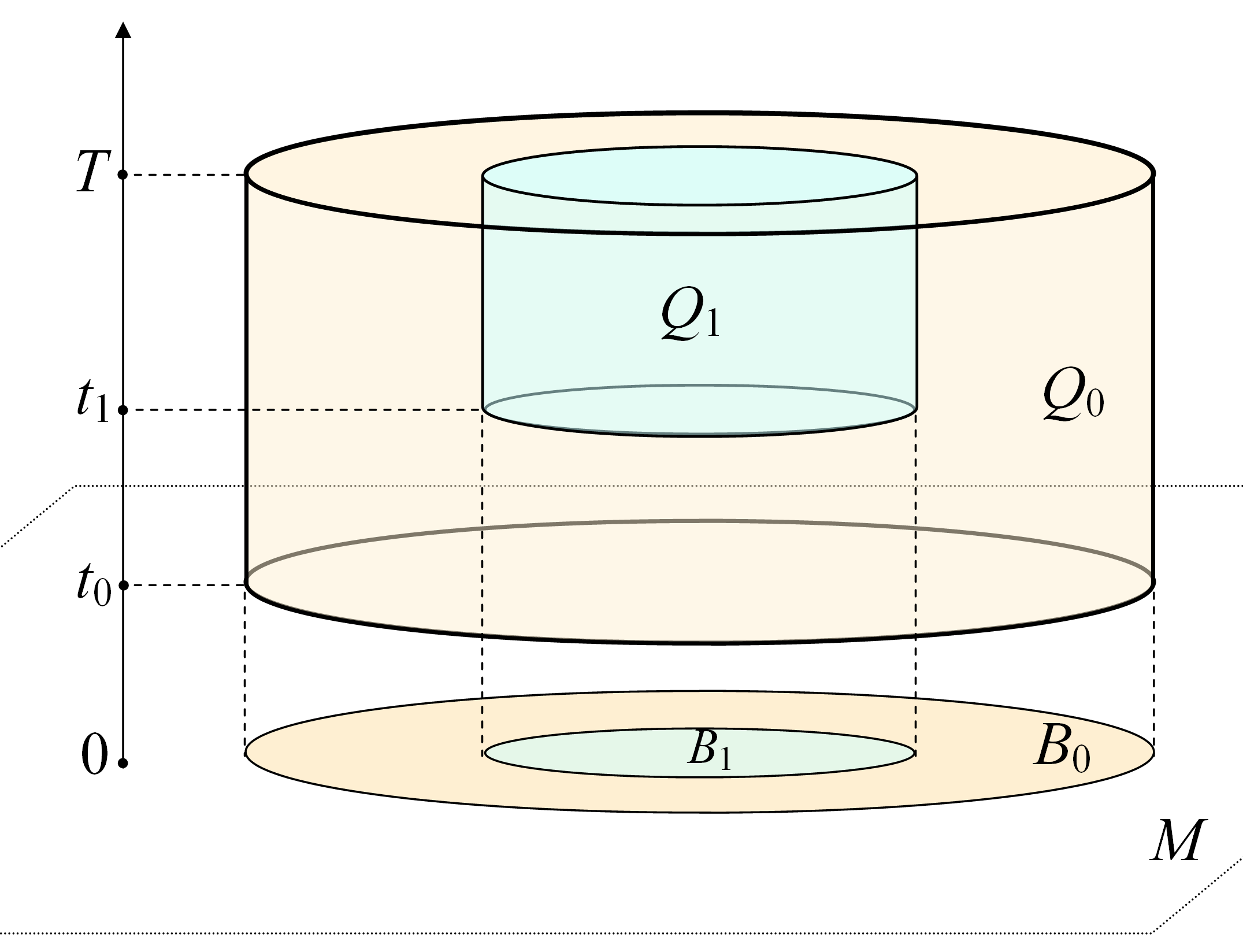}{\special{language "Scientific Word";type "GRAPHIC";maintain-aspect-ratio TRUE;display "USEDEF";valid_file "F";width
			2.2312in;height 1.7123in;depth 0pt;original-width 9.4841in;original-height
			7.2619in;cropleft "0";croptop "1";cropright "1";cropbottom "0";filename
			'pic3a.png';file-properties "NPEU";}}
\end{lemma}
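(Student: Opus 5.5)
We follow the standard De Giorgi scheme: a Caccioppoli estimate from Lemma \ref{Lem1} with a cut-off function, then the Faber-Krahn inequality (\ref{FKp}) in $B_{0}$, with the measure of the positivity set controlled by Chebyshev's inequality. Throughout, $u$ stands for the subsolution $v_{0}$ of the statement, so that $v_{i}=(u-\theta_{i})_{+}$; note $v_{1}\le v_{0}$ and $\left\Vert v_{1}\right\Vert_{L^\infty}\le \left\Vert v_{0}\right\Vert_{L^\infty}$.

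\emph{Step 1: cut-off and Caccioppoli.} Choose a Lipschitz $\eta(x,t)=\varphi(x)\chi(t)$ with the following properties: $\varphi=1$ on $B_{1}$, $\operatorname{supp}\varphi\subset B_{0}$ and $|\nabla\varphi|\le 1/(r_{0}-r_{1})$; $\chi=0$ for $t\le t_{0}$, $\chi=1$ for $t\ge t_{1}$, linear in between, so $0\le\partial_{t}\chi\le 1/(t_{1}-t_{0})$. Apply Lemma \ref{Lem1} on $[t_{0},\tau]$ for each $\tau\in[t_{1},T]$. Since $\eta(\cdot,t_{0})=0$, the boundary term at $t_{0}$ vanishes. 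On the right-hand side, write $v_{0}^{\lambda}=v_{0}^{D}v_{0}^{\sigma}\le \left\Vert v_{0}\right\Vert_{L^\infty(Q_{0})}^{D}v_{0}^{\sigma}$. This gives
\[
\sup_{\tau\in[t_{1},T]}\int_{B_{0}}v_{1}^{\lambda}\eta^{p}(\cdot,\tau)+c_{1}\Big(\tfrac{\theta_{1}}{\theta_{1}-\theta_{0}}\Big)^{-(q-1)(p-1)_{-}}\int_{Q_{0}'}|\nabla(v_{1}^{\alpha}\eta)|^{p}\le C\,S\,J_{0},
\]
where $Q_{0}'=B_{0}\times[t_{0},T]$. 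The constant $c_{2}$ together with its $\theta$-factor is absorbed into the definition of $S$.

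\emph{Step 2: Faber-Krahn on time slices.} For each $t\in[t_{1},T]$, apply (\ref{FKp}) in $B_{0}$ to $w=v_{1}^{\alpha}\eta(\cdot,t)\in W^{1,p}_{0}(B_{0})$; this is admissible by (\ref{valpha}). The positivity set satisfies $E_{t}\subset\{v_{0}>\theta_{1}-\theta_{0}\}$ because $v_{1}>0$ forces $v_{0}>\theta_{1}-\theta_{0}$. Then Chebyshev gives
\[
\mu(E_{t})\le (\theta_{1}-\theta_{0})^{-\lambda}\int_{B_{0}}v_{0}^{\lambda}\eta^{p}(\cdot,t).
\]
I would bound this slice integral using a Caccioppoli estimate at level $\theta_{0}$: either Lemma \ref{Lem1} with an intermediate level, or directly, up to constants, by $SJ_{0}$. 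This yields
\[
\int_{B_{1}}v_{1}^{\sigma}(\cdot,t)\le \frac{r_{0}^{p}}{(\iota(B_{0})\mu(B_{0}))^{\nu}}\Big(\frac{SJ_{0}}{(\theta_{1}-\theta_{0})^{\lambda}}\Big)^{\nu}\int_{B_{0}}|\nabla(v_{1}^{\alpha}\eta)|^{p}(\cdot,t).
\]

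\emph{Step 3: integrate in time.} Integrate over $[t_{1},T]$ and insert the gradient bound from Step 1. This produces $J_{1}\le C r_{0}^{p}S^{1+\nu}\cdots J_{0}^{1+\nu}$, up to the $\theta$-factors. The exponents $(q-1)(p-1)_{\pm}$ of $\theta_{1}/(\theta_{1}-\theta_{0})$ combine into $|q-1|(p-1)$.

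The main obstacle is matching exactly the power $S^{\nu}$ together with the single factor $(r_{0}-r_{1})^{-p}$ displayed in (\ref{compsingsp1}). The gradient term contributes a full $S$, not only $(r_{0}-r_{1})^{-p}$. To resolve this I would use the cruder measure bound, i.e.\ bound the level set via $v_{1}$-type sets with intermediate levels. Failing that, I would accept the constant rearrangement $S\cdot S^{\nu}$ against $S^{\nu}(r_{0}-r_{1})^{-p}$, noting $S\ge (r_{0}-r_{1})^{-p}$ and adjusting which factor is kept. I would check carefully which version is actually needed in the subsequent iteration.
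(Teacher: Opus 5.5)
Your skeleton (Caccioppoli, then Faber--Krahn on time slices, then Chebyshev) is the paper's, but as you set it up it does not prove (\ref{compsingsp1}). With the single space--time cutoff $\eta=\varphi\chi$, the gradient term is controlled by $S\,J_0$, because the $\partial_t\chi$-part of the Caccioppoli right-hand side is already $\|v_0\|^D_{L^\infty}J_0/(t_1-t_0)$. Step 3 then gives
\[
J_1\le \frac{C\,r_0^{p}\,S^{1+\nu}}{(\iota(B_0)\mu(B_0))^{\nu}(\theta_1-\theta_0)^{\lambda\nu}}\Big(\frac{\theta_1}{\theta_1-\theta_0}\Big)^{|q-1|(p-1)}J_0^{1+\nu}.
\]
Your fallback cannot turn this into the stated bound. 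The inequality $S\ge (r_0-r_1)^{-p}$ says exactly that $S^{1+\nu}\ge S^{\nu}(r_0-r_1)^{-p}$, so what you have is the weaker estimate.

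The paper gets $S^{\nu}(r_0-r_1)^{-p}$ by giving the cutoff two separate roles.
\begin{itemize}
\item For the gradient it applies Lemma \ref{Lem1} on $[t_1,T]$ only, with a purely spatial bump $\eta$ of $B_1$ in $B_{1/2}=B(x_0,\frac{r_0+r_1}{2})$. Then $\partial_t\eta=0$ and only $|\nabla\eta|^p\le 2^p(r_0-r_1)^{-p}$ enters; this is (\ref{gradalpha3}).
\item The space--time cutoff on $[t_0,T]$ is used only for the slice bound $\int_{B_{1/2}}v_0^{\lambda}(\cdot,t)\le c_4SJ_0$ for $t\in[t_1,T]$. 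This feeds Chebyshev and produces the factor $S^{\nu}$.
\end{itemize}

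Be aware, however, that (\ref{gradalpha3}) as written drops the term $\int_{B_0}v_1^{\lambda}\eta^p(\cdot,t_1)$. The lower-endpoint term of $[\int_{B_0}v_1^{\lambda}\eta^p]_{t_1}^{T}$ in (\ref{veta1}) enters with a minus sign and has no favourable sign. The natural bound for it is again $c_4SJ_0$, which restores a full factor $S$ and gives exactly your $S^{1+\nu}$ version.

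So the obstacle you hit is real. To get (\ref{compsingsp1}) literally, one needs an argument for that boundary term, and neither your proposal nor the paper's derivation supplies one. Otherwise the honest statement is the $S^{1+\nu}$ variant. In the iteration of Lemma \ref{Tmeansing} (where $r_k^p/(\iota(B_k)\mu(B_k))^{\nu}\le R^p/(\iota(B)\mu(B))^{\nu}$), this variant multiplies the admissible threshold $\theta^{\lambda}$ by $(R^pS)^{1/\nu}$. That factor stays bounded only when $T\gtrsim R^{p}\|u\|^{D}_{L^\infty(Q)}$.

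There are two further issues in your Step 2.
\begin{itemize}
\item Your Chebyshev bound $\mu(E_t)\le(\theta_1-\theta_0)^{-\lambda}\int_{B_0}v_0^{\lambda}\eta^p(\cdot,t)$ is false with a single cutoff. The set $E_t=\{u>\theta_1\}\cap\{\varphi>0\}$ contains points where $\eta<1$, so $\eta^p$ does not dominate the indicator of $E_t$. You need nested cutoffs, as in the paper: Faber--Krahn for $v_1^{\alpha}\eta$ with $\eta$ a bump of $B_1$ in $B_{1/2}$, and the slice bound with a cutoff equal to $1$ on $B_{1/2}$. This changes only constants.
\item Your intermediate-level device is the right way to obtain that slice bound. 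Take $\theta'=\frac{\theta_0+\theta_1}{2}$. Then $(u-\theta')_+>\frac{\theta_1-\theta_0}{2}$ on $\{u>\theta_1\}$, and Lemma \ref{Lem1} with levels $\theta'>\theta_0$ gives $\sup_{t\in[t_1,T]}\int_{B_{1/2}}(u-\theta')_+^{\lambda}\le CSJ_0$, using $\theta'/(\theta'-\theta_0)\le 2\theta_1/(\theta_1-\theta_0)$. This is cleaner than the paper's ``apply Lemma \ref{Lem1} to $v_0$''. That step puts the same truncation level on both sides, which Lemma \ref{Lem1} does not literally provide.
\end{itemize}
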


\begin{proof}
	Let $\eta(x, t)=\eta \left( x\right) $ be a bump function of $B_{1}$ in $B_{1/2}:=B\left( x_{0},\frac{r_{0}+r_{1}}{2}\right)$. Recall that by (\ref{valpha}), $v_{1}^{\alpha }\eta\in L^{p}\left([t_{0},T]; W_{0}^{1, p}(B)\right)$, where $\alpha$ is defined by (\ref{alpha}), that is $\alpha=\frac{\sigma}{p}$. Hence, applying the Faber-Krahn inequality (\ref{FKp}) in ball $B_{0}$ for any $t\in [t_{0}, T]$ we get that
	\begin{equation}\label{FKapp2sing}
		\int_{B_{1}}v_{1}^{\sigma}\leq \int_{B_{0}}\left( v_{1}^{\alpha }\eta \right) ^{p}\leq r_{0}^{p}\left( \frac{\mu
			\left( D_{t}\right) }{\iota (B_{0})\mu (B_{0})}\right) ^{\nu
		} \int_{B_{0}}\left\vert \nabla \left( v_{1}^{\alpha }\eta \right) \right\vert^{p},
	\end{equation}%
	where we used that $\alpha p=\sigma$ and $\eta =1$ in $B_{1}$ and \begin{equation*}
		D_{t}=\left\{ v_{1}^{\alpha }\eta \left( \cdot ,t\right) >0\right\} =\left\{
		v_{1}>0\right\} \cap \left\{ \eta >0\right\} =\left\{ u\left( \cdot,t\right)
		>\theta_{1} \right\} \cap B_{1/2}.
	\end{equation*} Also, note that $\eta_{t}=0$ and $\left\vert \nabla \eta \right\vert \leq \frac{2}{r_{0}-r_{1}}$. From (\ref{veta1}) we therefore obtain 
	\begin{align}\nonumber
		c_{1}\int_{t_{1}}^{T}\int_{B_{0}}\left\vert \nabla \left( v_{1}^{\alpha }\eta
		\right) \right\vert ^{p}&\leq c_{2}\left(\frac{\theta_{1}}{\theta_{1}-\theta_{0}}\right)^{|q-1|(p-1)}
		\int_{t_{1}}^{T}\int_{B_{0}}v_{0}^{\sigma}|\nabla \eta|^{p}\\&\label{gradalpha3}\leq \frac{c_{3}}{\left(r_{0}-r_{1}\right) ^{p}}\left(\frac{\theta_{1}}{\theta_{1}-\theta_{0}}\right)^{|q-1|(p-1)}J_{0},
	\end{align} where $c_{3}=c_{2}2^{p}$.
	
	Let us now apply Lemma \ref{Lem1} to function $v_{0}$ in $B_{0}\times \left[t_{0}, T\right] $. Take 
	\begin{equation*}
		\eta \left( x,t\right) =\eta _{1}\left( x\right) \eta _{2}\left( t\right) ,
	\end{equation*}%
	where $\eta _{1}$ is a bump function of $B_{1/2}$ in $B_{0}$ so that 
	\begin{equation*}
		\left\vert \nabla \eta _{1}\right\vert \leq \frac{2}{r_{0}-r_{1}},
	\end{equation*}%
	and $\eta _{2}$ is a bump function of $\left[ t_{1},T\right] $ in $\left[
	t_{0},T\right] $, that is, 
	\begin{equation*}
		\eta _{2}\left( t\right) =\left\{ 
		\begin{array}{lc}
			1,\ \ t\geq t_{1} \\ 
			\frac{t-t_{0}}{t_{1}-t_{0}},\ \ t_{0}\leq t\leq t_{1}%
		\end{array}%
		\right.
	\end{equation*}%
	so that%
	\begin{equation*}
		\left\vert \partial _{t}\eta _{2}\right\vert \leq \frac{1}{t_{1}-t_{0}}.\end{equation*}
	From (\ref{veta1}) we obtain
	\begin{align*}
		\left[\int_{B_{0}}v_{0}^{\lambda}\eta ^{p}\right] _{t_{0}}^{T}&\leq 
		\int_{t_{0}}^{T}\int_{B_{0}}\left[p\eta^{p-1}\partial_{t}\eta v_{0}^{\lambda}+c_{2}\left(\frac{\theta_{1}}{\theta_{1}-\theta_{0}}\right)^{(q-1)(p-1)_{+}}\left\vert \nabla \eta \right\vert ^{p}
		v_{0}^{\sigma}\right]\\&=\int_{t_{0}}^{T}\int_{B_{0}}\left[p\eta^{p-1}\partial_{t}\eta v_{0}^{D} +c_{2}\left(\frac{\theta_{1}}{\theta_{1}-\theta_{0}}\right)^{(q-1)(p-1)_{+}}\left\vert \nabla \eta \right\vert ^{p}
		\right]v_{0}^{\sigma}.
	\end{align*}%
	Hence, for any $t\in [t_{1}, T]$, using that $\eta_{2}(t_{0})=0$ and $\eta(x, t)=1$ for $x\in B_{1/2}$ and $t\geq t_{1}$, $$\int_{B_{1/2}}v_{0}^{\lambda }\left( \cdot ,t\right)\leq c_{4} \int_{t_{0}}^{T}\int_{B_{0}}\left[\frac{||v_{0}||_{L^{\infty}}^{D}}{t_{1}-t_{0}}+\left(\frac{\theta_{1}}{\theta_{1}-\theta_{0}}\right)^{(q-1)(p-1)_{+}}\frac{1}{(r_{0}-r_{1})^{p}}
	\right]v_{0}^{\lambda}\leq c_{4}SJ_{0},$$ where $c_{4}=\max(p, c_{3})$.
	Thus, we deduce
	\begin{equation*}
		\mu \left( D_{t}\right) \leq \frac{1}{(\theta_{1}-\theta_{0}) ^{\lambda}}%
		\int_{B_{1/2}\cap \{u>\theta_{1}\}}v_{0}^{\lambda }\left( \cdot ,t\right) \leq \frac{c_{4}SJ_{0}}{%
			(\theta_{1}-\theta_{0}) ^{\lambda}}.
	\end{equation*}%
	Combining this with (\ref{FKapp2sing}) and (\ref{gradalpha3}) we obtain
	\begin{align*}
		J_{1}=\int_{t_{1}}^{T}\int_{B_{1}}v_{1}^{\sigma}&\leq r_{0}^{p}\left( \frac{c_{4}SJ_{0}}{\iota (B_{0})\mu (B_{0})(\theta_{1}-\theta_{0}) ^{\lambda}}\right) ^{\nu}
		\frac{c_{3}}{c_{1}\left(r_{0}-r_{1}\right) ^{p}}\left(\frac{\theta_{1}}{\theta_{1}-\theta_{0}}\right)^{|q-1|(p-1)}J_{0},
	\end{align*}	which implies (\ref{compsingsp1}) and finishes the proof.
\end{proof}

\subsection{Iterations and the mean value theorem}

\begin{lemma}
	\label{Tmeansing}Let the ball $B=B\left( x_{0},R\right) $ be precompact. Let $u$
	be a non-negative bounded subsolution in 
	$Q=B\times \left[ 0,T\right].$
	Let $\sigma$ and $\lambda$ be reals such that \begin{equation}\label{silapossing}\sigma>0\quad \textnormal{and}\quad \lambda=\sigma+D.\end{equation}
	Then, for the cylinder 
	\begin{equation*}
		Q^{\prime }=\frac{1}{2}B\times [ \frac{1}{2}T,T] ,
	\end{equation*}%
	we have%
	\begin{equation}
		\left\Vert u\right\Vert _{L^{\infty }\left( Q^{\prime }\right) }\leq \left( 
		\frac{CS}{\iota (B)\mu (B)}\int_{Q}u^{\sigma}\right) ^{1/\lambda},  \label{meansing}
	\end{equation}%
	where \begin{equation}\label{defS}S=\frac{\left\Vert u\right\Vert _{L^{\infty }(Q)}^{D }}{T}+%
		\frac{1}{R^{p}},\end{equation} $\iota (B)$ is the Faber-Krahn constant in $B$,
	and the constant $C$ depends on $p$, $q$, $\lambda $ and $\nu$.
	\FRAME{ftbphF}{2.2312in}{1.7123in}{0pt}{\Qcb{Cylinders $Q^{\prime}$ and $Q$}}{}{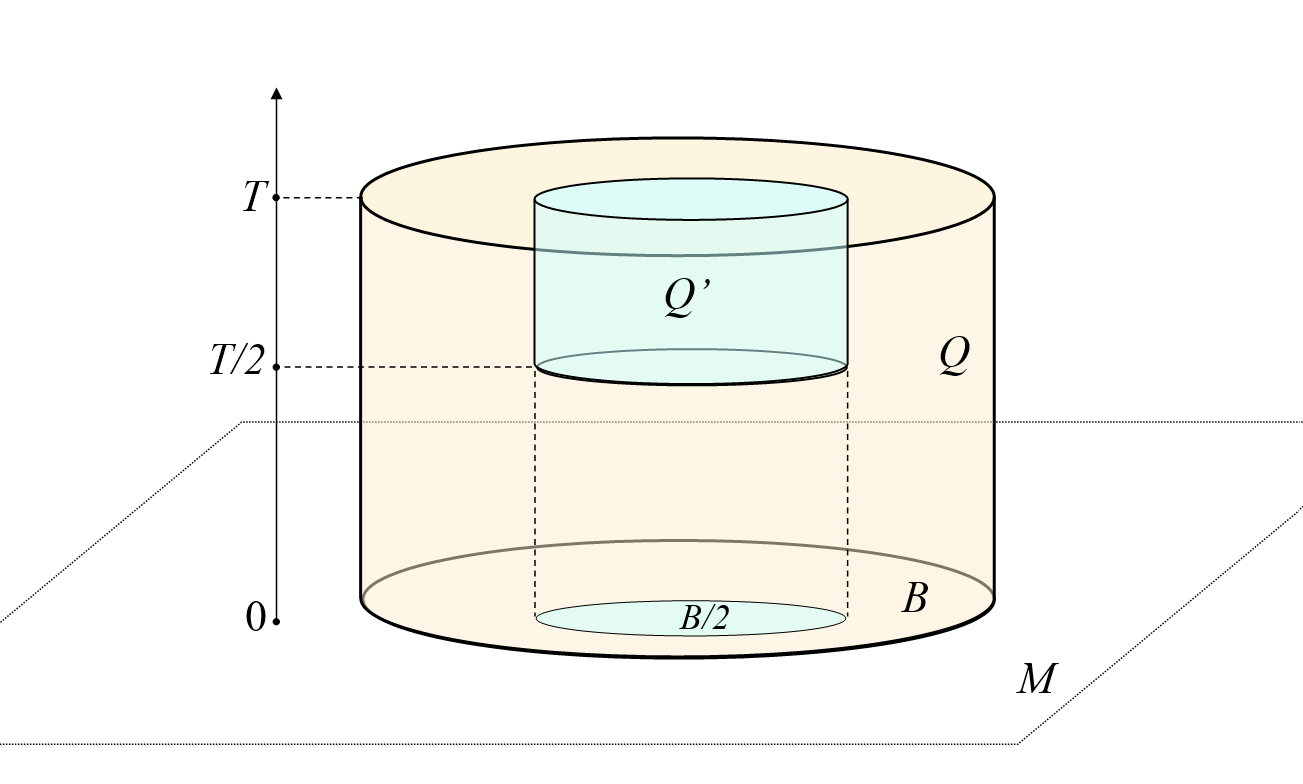}{\special{language "Scientific Word";type "GRAPHIC";maintain-aspect-ratio TRUE;display "USEDEF";valid_file "F";width
			2.2312in;height 1.7123in;depth 0pt;original-width 9.4841in;original-height
			7.2619in;cropleft "0";croptop "1";cropright "1";cropbottom "0";filename
			'pic11.png';file-properties "NPEU";}}
\end{lemma}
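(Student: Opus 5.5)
We prove (\ref{meansing}) in two stages, mirroring the proof of Lemma \ref{specMV}: first a De Giorgi iteration for large $\sigma$ based on Lemma \ref{Lem2yyy}, and then a reduction from large $\sigma$ to arbitrary $\sigma>0$.

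\textbf{Stage 1: large $\sigma$.} Fix $\sigma\geq\max(p,pq)$ and $\lambda=\sigma+D$, and let $\theta>0$ be a level to be chosen. For $k\geq 0$ set
$$r_{k}=\tfrac12 R\,(1+2^{-k}),\qquad t_{k}=\tfrac12 T\,(1-2^{-k}),\qquad \theta_{k}=(1-2^{-k-1})\theta,$$
and define $Q_{k}=B(x_{0},r_{k})\times[t_{k},T]$, $v_{k}=(u-\theta_{k})_{+}$ and $J_{k}=\int_{Q_{k}}v_{k}^{\sigma}$. Then $Q_{0}=Q$, $\theta_0=\theta/2$, and $Q_{k}$ shrinks to $Q'$. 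We apply Lemma \ref{Lem2yyy} to the pair $(Q_{k},Q_{k+1})$ with levels $\theta_{k}<\theta_{k+1}$, together with the following bounds:
\begin{itemize}
\item $\theta_{k+1}/(\theta_{k+1}-\theta_{k})\leq 2^{k+2}$;
\item $r_{k}-r_{k+1}\simeq 2^{-k}R$ and $t_{k+1}-t_{k}\simeq 2^{-k}T$;
\item $\|v_{k}\|_{L^{\infty}}\leq\|u\|_{L^\infty(Q)}$, so the quantity $S$ in Lemma \ref{Lem2yyy} is at most $C\,2^{k\gamma}S$, with $S$ as in (\ref{defS});
\item the monotonicity of (\ref{cmu}) gives $\iota(B_{k})\mu(B_{k})\geq c\,\iota(B)\mu(B)$.
\end{itemize}
This yields the recursion
$$J_{k+1}\leq \frac{A^{k}\,R^{p}S^{\nu}}{(\iota(B)\mu(B))^{\nu}\,\theta^{\lambda\nu}\,R^{p}}\,J_{k}^{1+\nu}=\frac{A^{k}S^{\nu}}{(\iota(B)\mu(B))^{\nu}\theta^{\lambda\nu}}\,J_{k}^{1+\nu}.$$
By the standard fast-geometric-convergence lemma (Lemma 5.2 of \cite{Grigoryan2024}), $J_{k}\to0$ provided
$$J_{0}\leq c\,\bigl(\iota(B)\mu(B)\bigr)\,\theta^{\lambda}S^{-1}.$$
We therefore choose $\theta^{\lambda}=C\,S\,J_{0}/(\iota(B)\mu(B))$ and use $J_{0}\leq\int_{Q}u^{\sigma}$. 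Since $J_{k}\to 0$ forces $u\leq\theta$ on $Q'$, this gives (\ref{meansing}) for all large $\sigma$.

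\textbf{Stage 2: arbitrary $\sigma>0$.} Fix $\sigma_{0}$ large and take $\sigma<\sigma_{0}$. Exactly as in the proof of Lemma \ref{specMV}, we use a nested family of cylinders $B_{k}\times[t_{k}',T]$ that increases to $Q$ and contains $Q'$, with $R_{k}\uparrow R$ and $t_{k}'\downarrow 0$ from $T/2$. Around each point $(x,s)$ of the $k$-th cylinder we apply the Stage 1 estimate in a small cylinder $B(x,\rho_{k})\times[s-\tau_{k},s]$ contained in the $(k+1)$-st cylinder, where $\rho_{k}\simeq 2^{-k}R$ and $\tau_{k}\simeq 2^{-k}T$. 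Using $u^{\sigma_{0}}\leq\|u\|_{\infty}^{\sigma_{0}-\sigma}u^{\sigma}$ and the monotonicity of (\ref{cmu}), we get
$$\|u\|^{\lambda_{0}}_{L^\infty(k)}\leq \frac{C\,A^{k}\,S}{\iota(B)\mu(B)}\,\|u\|^{\lambda_{0}-\lambda}_{L^\infty(k+1)}\int_{Q}u^{\sigma}.$$
The iteration in $J_{k}=\|u\|^{-(\lambda_{0}-\lambda)}_{L^\infty(k)}$ then closes identically to Lemma \ref{specMV}.

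\textbf{Main obstacle.} The delicate point is that the local quantity $S$ for the small cylinders must be controlled by $2^{k\gamma}S$. This holds because $\|u\|_{L^\infty}$ on the small cylinder is at most $\|u\|_{L^{\infty}(Q)}$, while the time length $\tau_k$ and radius $\rho_k$ shrink only geometrically in $k$. The resulting factor $2^{k\gamma}$ is absorbed into $A^{k}$, and the geometric factor $A^{k}$ is harmless in the iteration.
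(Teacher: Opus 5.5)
Your proposal is correct and follows the paper's proof. Stage 1 is the same De Giorgi iteration via Lemma \ref{Lem2yyy}; your shift $\theta_0=\theta/2$ even avoids the paper's use of $\theta_0=0$, which Lemma \ref{Lem2yyy} formally excludes, provided you define $\theta$ through $\int_Q u^\sigma$ rather than through the $\theta$-dependent $J_0$. Stage 2 spells out the ``standard'' reduction to small $\sigma$ that the paper only indicates by referring to the proof of Lemma \ref{specMV}, with the local $S$ controlled by $2^{k\gamma}S$ exactly as you describe.
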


\begin{remark}
	In \cite{Grigoryan2024a} the same mean value inequality was proved under the condition that \begin{equation}\label{allcases} 1<p<2\quad\textnormal{and}\quad1\leq q< \frac{1}{p-1}.\end{equation}
\end{remark}

\begin{proof}
	Let us first prove (\ref{meansing}) for $\sigma$ large enough as in Lemma \ref{Lem1}. Consider sequences 
	\begin{equation*}
		r_{k}=\left( \frac{1}{2}+2^{-k-1}\right) R,\ \ \ t_{k}=\left(
		1-2^{-k}\right) \frac{T}{2}
	\end{equation*}%
	where $k=0,1,2,...$, so that
	$r_{0} =R$ and $r_{k}\searrow \frac{1}{2}R$ as $k\rightarrow
	\infty $ $t_{0} =0$ and $t_{k}\nearrow \frac{1}{2}T $ as $k\rightarrow\infty$.
	Set 
	$B_{k}=B\left( x_{0},r_{k}\right)$, $Q_{k}=B_{k}\times \left[ t_{k},T   \right]$
	so that 
	$B_{0}=B$, $Q_{0}=Q$ and $Q_{\infty}:=\lim_{k\rightarrow \infty }Q_{k}=Q^{\prime }$.
	
	Choose some $\theta >0$ to be specified later and define $\theta_{k}=\left( 1-2^{-k}\right) \theta$ and
	\begin{equation*}
		u_{k}=(u-\theta_{k})_{+}=\left( u-\left( 1-2^{-k}\right) \theta \right) _{+}.
	\end{equation*}
	
	Set 
	\begin{equation*}
		J_{k}=\int_{Q_{k}}u_{k}^{\sigma }.
	\end{equation*}%
	We obtain by Lemma \ref{Lem2yyy} that%
	\begin{equation*}
		J_{k+1}\leq \frac{Cr_{k}^{p}S_{k}^{\nu}}{\left( \iota (B_{k})\mu (B_{k})\right) ^{\nu
			}(\theta_{k+1}-\theta_{k})^{\lambda\nu}\left( r_{k}-r_{k+1}\right) ^{p}}\left(\frac{\theta_{k+1}}{\theta_{k+1}-\theta_{k}}\right)^{|q-1|(p-1)}J_{k} ^{1+\nu },
	\end{equation*}%
	where 
	\begin{equation*}
		S_{k}=\frac{\left\Vert u\right\Vert _{L^{\infty }(Q_{k})}^{D}}{t_{k+1}-t_{k}}+\left(\frac{\theta_{k+1}}{\theta_{k+1}-\theta_{k}}\right)^{(q-1)(p-1)_{+}}
		\frac{1}{\left( r_{k}-r_{k+1}\right) ^{p}}.
	\end{equation*}%
	By monotonicity of the function (\ref{cmu}), we have%
	\begin{equation*}
		\frac{r_{k}^{p}}{\left( \iota (B_{k})\mu (B_{k})\right) ^{\nu }}\leq \frac{%
			R^{p}}{\left( \iota (B)\mu (B)\right) ^{\nu }}.
	\end{equation*}%
	Since $r_{k}-r_{k+1}=2^{-k-2}R$, $t_{k+1}-t_{k}=2^{-k-2}T,$ and $\theta_{k+1}-\theta_{k}=2^{-(k+1)}\theta$
	it follows that%
	\begin{equation*}
		S_{k}\leq 2^{\left( k+2\right) (p+(q-1)(p-1)_{+})}\left( \frac{\left\Vert u\right\Vert _{L^{\infty }(Q)}^{D }}{T}+\frac{1}{R^{p}}\right)
		=2^{\left( k+2\right) (p+(q-1)(p-1)_{+})}S.
	\end{equation*}%
	Hence,%
	\begin{eqnarray*}
		J_{k+1} \leq \frac{C2^{\left( k+2\right) (\lambda\nu+p(1+\nu)+\nu(q-1)(p-1)_{+}+|q-1|(p-1))}S^{\nu}}{\left( \iota (B)\mu (B)\right) ^{\nu	}\theta^{\lambda\nu}}J_{k} ^{1+\nu}=\frac{A^{k}J_{k}^{1+\nu}}{\Theta }
	\end{eqnarray*}%
	where $$A=2^{\lambda\nu+p(1+\nu)+\nu(q-1)(p-1)_{+}+|q-1|(p-1)}\geq 1\quad\textnormal{and}\quad\Theta =c\left(\frac{ \iota (B)\mu (B)\theta ^{\lambda}}{S}\right)^{\nu}.$$
	
	Now let us apply Lemma 6.1 from \cite{grigor2024finite} with $\omega =\nu$: if 
	\begin{equation}
		\Theta \geq A^{1/\nu }J_{0}^{\nu },  \label{Thetayyy}
	\end{equation}%
	then, for all $k\geq 0,$%
	\begin{equation*}
		J_{k}\leq A^{-k/\nu}J_{0}.
	\end{equation*}%
	In terms of $\theta $ the condition (\ref{Thetayyy}) is equivalent%
	\begin{equation*}
		c\left(\frac{ \iota (B)\mu (B)\theta ^{\lambda}}{S}\right)^{\nu}\geq A^{1/\nu}J_{0}^{\nu}
	\end{equation*}%
	that is,%
	\begin{equation*}
		\theta \geq \left(\frac{CSJ_{0}}{\iota (B)\mu (B)}\right)^{1/\lambda}.
	\end{equation*}%
	Hence, we choose $\theta $ to have equality here. For this $\theta $ we
	obtain $J_{k}\rightarrow 0$ as $k\rightarrow \infty $, which implies that
	$u\leq \theta$ in $Q_{\infty }.$
	Hence,%
	\begin{equation*}
		\left\Vert u\right\Vert _{L^{\infty }\left( Q^{\prime }\right) }\leq \left( 
		\frac{CSJ_{0}}{\iota (B)\mu (B)}%
		\right) ^{1/\lambda},
	\end{equation*}%
	which proves (\ref{meansing}) for any large enough $\sigma$.
	
	By standard methods (see the proof of Lemma \ref{specMV} or \cite{Grigoryan2024}) we conclude from this that (\ref{meansing}) holds for any $\sigma >0$.
\end{proof}

	\subsection{Optimal long time decay}
	The next lemma is the main result about long time decay. 
	
	Using the mean value inequality \ref{Tmeansing} and the method from \cite{Grigoryan2024a} we can prove the following result, which improves the range of $p$ and $q$ in Lemma 5.4 from \cite{Grigoryan2024a}.
	
	\begin{lemma}
		\label{Tlong}Assume that $M$ is geodesically complete and satisfies the
		relative Faber-Krahn inequality. Assume that, for all $x\in M$ and $R\geq 1$,
		\begin{equation}
			\mu (B(x,R))\geq cR^{\alpha },  \label{Ral}
		\end{equation}%
		for some $c,\alpha >0.$ Assume also that 
		\begin{equation*}
			\beta :=p-D \alpha >0.
		\end{equation*}%
		Let $u$ be a non-negative bounded subsolution in $M\times [0, \infty)$ with initial function $u_{0}=u(\cdot, 0).$ Then, for all $t>0$, we have 
		\begin{equation}
			\left\Vert u\left( \cdot ,t\right) \right\Vert _{L^{\infty }(M)}\leq \frac{%
				C}{t^{\alpha /\beta }}\left( ||u_{0}||_{L^{1}(M)}+\left\Vert u_{0}\right\Vert _{L^{\infty }(M)}\right) ^{p/\beta },
			\label{ulong}
		\end{equation}%
		where $C$ depends on $c, \alpha ,p,q$ and on the constants in the relative Faber-Krahn inequality.
	\end{lemma}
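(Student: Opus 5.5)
We will prove (\ref{ulong}) by applying the mean value inequality of Lemma \ref{Tmeansing} with $\sigma=1$, i.e. $\lambda=1+D$, in a large ball, and then closing the resulting inequality with an $L^{\infty}$ bootstrap.

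Fix $t>0$ and set $m(t)=\left\Vert u(\cdot,t)\right\Vert_{L^{\infty}(M)}$. By Lemma \ref{monl1}, both $\left\Vert u(\cdot,s)\right\Vert_{L^{\infty}(M)}$ and $\left\Vert u(\cdot,s)\right\Vert_{L^{1}(M)}$ are non-increasing in $s$. Denote $N=\left\Vert u_0\right\Vert_{L^1}$ and $K=\left\Vert u_0\right\Vert_{L^\infty}$.

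For an arbitrary $x\in M$ and $R>0$, apply Lemma \ref{Tmeansing} in $B=B(x,R)$ (precompact by completeness) on the time interval $[t/2,t]$, i.e. to $u(\cdot,\cdot+t/2)$ with $T=t/2$. This gives
\begin{equation*}
m(t)^{1+D}\leq \frac{C}{\mu(B(x,R))}\left(\frac{m(t/2)^{D}}{t}+\frac{1}{R^{p}}\right)\int_{t/2}^{t}\!\int_B u \leq \frac{C}{\mu(B(x,R))}\left(m(t/2)^{D}+\frac{t}{R^{p}}\right)N.
\end{equation*}
Here $\iota(B)\geq c$ by the relative Faber--Krahn inequality. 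We choose $R$ by balancing $R^{p}=t\,m(t/2)^{-D}$, so that the bracket is at most $2m(t/2)^{D}$. If $R\geq1$, the volume bound (\ref{Ral}) gives $\mu(B)\geq cR^{\alpha}=c\,t^{\alpha/p}m(t/2)^{-D\alpha/p}$, and therefore
\begin{equation*}
m(t)^{1+D}\leq C N\, t^{-\alpha/p}\, m(t/2)^{D+D\alpha/p}.
\end{equation*}
The case $R<1$ corresponds to small times, $t< m(t/2)^{D}\le K^{D}$. In that regime we simply use $m(t)\le K$ and check that $K\leq C t^{-\alpha/\beta}K^{p/\beta}$ whenever $t\lesssim K^{D}$. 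This holds because $K\,t^{\alpha/\beta}\le K^{1+D\alpha/\beta}=K^{p/\beta}$, using $\beta+D\alpha=p$.

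The remaining task, which is the main obstacle, is to close the recursive inequality $m(t)^{1+D}\le CNt^{-\alpha/p}m(t/2)^{D(1+\alpha/p)}$. Note that $D(1+\alpha/p)<1+D$ exactly when $D\alpha<p$, i.e. $\beta>0$. We would use the standard doubling argument, as in \cite{Grigoryan2024a}. Set $\Phi=\sup_{s\in(0,t]}s^{\alpha/\beta}m(s)$; this is finite because $m\le K$ and we may restrict to $s\le t$. Substituting $m(s/2)\le (s/2)^{-\alpha/\beta}\Phi$ and checking that the powers of $s$ match (which follows from $\beta=p-D\alpha$), we obtain
\begin{equation*}
\Phi^{1+D}\le CN\,\Phi^{D(1+\alpha/p)},
\end{equation*}
hence $\Phi\le (CN)^{1/(1+D-D-D\alpha/p)}=(CN)^{p/\beta}$. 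For small $s$ we use the bound from the $R<1$ case instead, which contributes the term $K^{p/\beta}$. Combining the two cases yields (\ref{ulong}) with $(N+K)^{p/\beta}$. The delicate points are the exponent bookkeeping in this step and verifying that the two regimes ($R\ge1$ and $R<1$) glue consistently inside the supremum argument.
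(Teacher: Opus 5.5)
Your proposal is correct and follows essentially the route the paper indicates. The paper gives no written-out proof; it only says to use the mean value inequality of Lemma \ref{Tmeansing} together with the method of \cite{Grigoryan2024a}, and your ingredients are exactly those: $\sigma=1$ on $B(x,R)\times[\frac{t}{2},t]$, the $L^{1}$ and $L^{\infty}$ monotonicity of Lemma \ref{monl1}, the balancing choice $R^{p}=t\,m(t/2)^{-D}$ with (\ref{Ral}), and a bootstrap in which the small-time regime $R<1$ produces the $\left\Vert u_0\right\Vert_{L^\infty}^{p/\beta}$ term. The points you flag as delicate close as you sketch: the powers of $s$ cancel because $\beta+D\alpha=p$, $\Phi\le t^{\alpha/\beta}\left\Vert u_0\right\Vert_{L^\infty}<\infty$, and taking the supremum over $s\in(0,t]$ of the larger of the two regime bounds gives $\Phi\le\max\{(CN)^{p/\beta},K^{p/\beta}\}$.
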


\begin{remark}
	Note that in the result in \cite{Grigoryan2024a} we also, besides $D>0$, assumed that $$p<2\quad\textnormal{and}\quad q\geq 1$$ (see (\ref{allcases})).  
\end{remark}
	
	\section{Combined estimate}
	
	The following theorem is our main result (equivalent to Theorem \ref{singmainint} from the Introduction).
	
	\begin{theorem}\label{singmain}
		Assume that $M$ is geodesically complete and satisfies the relative
		Faber-Krahn inequality. Assume that, for all $x\in M$ and $R\geq 1,$%
		\begin{equation*}
			\mu (B(x,R))\geq cR^{\alpha },
		\end{equation*}%
		for some $c,\alpha >0.$ Assume that \emph{(\ref{signconpq})} holds and that
		\begin{equation*}
			\beta :=p-D \alpha >0.
		\end{equation*}%
		Let $u$ be a bounded non-negative subsolution in $M\times [0, \infty)$ with initial function $u_{0}=u\left( \cdot ,0\right)\in L^{1}(M)\cap L^{\infty}(M)$ and set $A=\limfunc{supp}u_{0}$. Denote 
		$\left\vert x\right\vert =d(x,A).$
		Then, for all $t>0$ and all $x\in M $, we have%
		\begin{align}
			\left\Vert u\left( \cdot ,t\right) \right\Vert _{L^{\infty }(B(x,\frac{1}{2}\left\vert x\right\vert ))}\leq \frac{C}{t^{\alpha/\beta }} \left(1+\frac{|x|}{t^{1/\beta}}\right)^{-\frac{p}{D}},  \label{uoffon}
		\end{align}%
 where the positive constant $C$ depends on $c, \alpha, p,q, ||u_{0}||_{L^{1}(M)},||u_{0}||_{L^{\infty}(M)}$ and on the constants in the relative Faber-Krahn inequality.
	\end{theorem}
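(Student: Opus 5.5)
The plan is to combine the long time decay of Lemma \ref{Tlong} with the long distance decay of Lemma \ref{TFPSqy}, splitting into the two regimes $|x|\le t^{1/\beta}$ and $|x|> t^{1/\beta}$.

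\emph{Case $|x|\le t^{1/\beta}$.} Here $\left(1+|x|t^{-1/\beta}\right)^{-p/D}\ge 2^{-p/D}$. Lemma \ref{Tlong} gives
\[
\|u(\cdot,t)\|_{L^\infty(M)}\le C t^{-\alpha/\beta}\left(\|u_0\|_{L^1(M)}+\|u_0\|_{L^\infty(M)}\right)^{p/\beta},
\]
which is at most $C' t^{-\alpha/\beta}\left(1+|x|t^{-1/\beta}\right)^{-p/D}$ with $C'$ depending on the norms of $u_0$. This case needs nothing more.

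\emph{Case $|x|>t^{1/\beta}$.} Set $R=|x|=d(x,A)$, so $u_0=0$ on $B(x,R)$ up to null sets, since $A$ is the support. I cannot apply Lemma \ref{TFPSqy} directly on $[0,t]$: it gives $C(t/R^p)^{1/D}$, and this lacks the factor $t^{-\alpha/\beta}$. Instead I restart the solution at time $s=t/2$. The function $w(\cdot,\tau)=u(\cdot,s+\tau)$ is a subsolution, but its initial datum does not vanish in $B(x,R)$. So I would use the comparison in the opposite order.

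\emph{Step 1.} Let $B=B(x,R)$. Apply Lemma \ref{TFPSqy} on the whole interval $[0,t]$, using the relative Faber--Krahn bound $\iota(B)\ge c$:
\[
\|u\|_{L^\infty(\frac12 B\times[0,t])}\le C\left(\frac{t}{R^p}\right)^{1/D}.
\]
We need this to be at most $C t^{-\alpha/\beta}(R/t^{1/\beta})^{-p/D}$. Compute
\[
t^{-\alpha/\beta}\,R^{-p/D}\,t^{p/(D\beta)}=R^{-p/D}\,t^{(p-\alpha D)/(D\beta)}=R^{-p/D}\,t^{1/D},
\]
since $\beta=p-\alpha D$. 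So the two expressions coincide exactly, and Lemma \ref{TFPSqy} alone gives the full bound in the far regime. Combining this with $1+|x|t^{-1/\beta}\le 2|x|t^{-1/\beta}$ finishes the case.

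The main obstacle is only bookkeeping. The ball $B(x,\frac12|x|)$ in \eqref{uoffon} is $\frac12 B$, the constants from Lemma \ref{TFPSqy} depend only on $p,q,\nu$ and the uniform lower bound on $\iota$, and the support must be understood as a closed set so that $u_0=0$ in the open ball $B(x,|x|)$. Geodesic completeness is assumed, so Lemmas \ref{monl1} and \ref{TFPSqy} apply. For continuous $u$, the pointwise statement follows immediately.
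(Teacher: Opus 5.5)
Your proof is correct and follows the paper's argument. The near regime $|x|\le t^{1/\beta}$ is handled by the global bound of Lemma \ref{Tlong}. The far regime is handled by Lemma \ref{TFPSqy} in $B(x,|x|)$, together with the identity $(t/|x|^p)^{1/D}=t^{-\alpha/\beta}(|x|/t^{1/\beta})^{-p/D}$, which follows from $\beta=p-\alpha D$. The detour about restarting the solution at time $t/2$ is unnecessary, as your own Step 1 shows, and can be deleted.
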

	
	\begin{remark}
Theorem \ref{singmain} improves Theorem 6.1 from \cite{Grigoryan2024a} in two ways. In \cite{Grigoryan2024a} it was proved that $$\left\Vert u\left( \cdot ,t\right) \right\Vert _{L^{\infty }(B(x,\frac{1}{2}\left\vert x\right\vert ))}\leq \frac{C}{t^{\alpha/\beta }} \Phi\left(1+\frac{|x|}{t^{1/\beta}}\right)$$ with $$\Phi(s)=s^{-\frac{p}{D}}\log^{\gamma}(1+s),$$ where $\gamma$ is a positive constant. Secondly, it was additionally assumed that $$p<2\quad\textnormal{and}\quad q\geq 1.$$\end{remark}
	
	\begin{proof}
		Let us first prove that for all $t>0$ and all $x\in M\setminus A$, we have%
		\begin{eqnarray}
			\left\Vert u\left( \cdot ,t\right) \right\Vert _{L^{\infty }(B(x,\frac{1}{2}\left\vert x\right\vert ))}\leq \frac{C_{1 }}{t^{\alpha /\beta }}\wedge C_{2}\left(\frac{t}{|x|^{p}}\right)^{\frac{1}{D}}, \label{uonoff}
		\end{eqnarray}%
		where the positive constants $C_{1}, C_{2}$ depend on $c, \alpha, p,q, ||u_{0}||_{L^{1}(M)}, \left\Vert u_{0}\right\Vert _{L^{\infty }(M)}$ and on the constants
		in the relative Faber-Krahn inequality.
		
		By Lemma \ref{Tlong} we have%
		\begin{equation*}
			\left\Vert u\left( \cdot ,t\right) \right\Vert _{L^{\infty }(M)}\leq \frac{C}{t^{\alpha /\beta }}\left( ||u_{0}||_{L^{1}(M)}+\left\Vert u_{0}\right\Vert _{L^{\infty }(M)}\right)
			^{p/\beta },
		\end{equation*}%
		which gives the first term in  (\ref{uonoff}).
		In order to obtain the second term in (\ref{uonoff}), we apply Lemma \ref{TFPSqy}
		in the ball $B_{x}=B(x,\left\vert x\right\vert )$ that is disjoint with $%
		\limfunc{supp}u_{0}$ and deduce
		\begin{eqnarray*}
			\left\Vert u(\cdot ,t)\right\Vert _{L^{\infty }(\frac{1}{2}B_{x})} \leq
			C\left( \frac{t}{\iota (B_{x})|x|^{p}}\right) ^{\frac{1}{D }} \leq C\left(\frac{t}{|x|^{p}}\right)^{\frac{1}{D}}.
		\end{eqnarray*}
		
		Now let us show how (\ref{uonoff}) implies (\ref{uoffon}). In the case when $\frac{|x|}{t^{1/\beta}}\leq C^{\prime}$ for some constant $C^{\prime}>1$, we have $\left(1+\frac{|x|}{t^{1/\beta}}\right)^{-p/D}\geq \textnormal{const}>0$, which yields (\ref{uoffon}). On the other hand, if $\frac{|x|}{t^{1/\beta}}\geq C^{\prime}$, we see that $$\frac{1}{t^{\alpha/\beta }}\left(1+\frac{|x|}{t^{1/\beta}}\right)^{-p/D}\simeq \frac{t^{1/D}}{|x|^{p/D}},$$ because $\frac{p}{\beta D}-\frac{\alpha}{\beta}=\frac{1}{D}$, which finishes the proof of (\ref{uoffon}) also in this case.
	\end{proof}
	
	\begin{remark}\label{remarksharp}
		Consider a model manifold with profile $S(r)=Cr^{\alpha-1}$, for some $\alpha \in (0,n]$ and all $r\geq r_{0}$, that is, $M=(0, +\infty)\times \mathbb{S}^{n-1}$ as topological spaces and $M$ is equipped with the Riemannian metric $ds^{2}$ given by $ds^{2}=dr^{2}+\psi^{2}(r)d\theta^{2},$ where $\psi(r)$ is a smooth positive function on $(0, +\infty)$ and $d\theta^{2}$ is the standard Riemannian metric on $\mathbb{S}^{n-1}$ and $S(r)=\psi^{n-1}(r)$ (cf. Section 7.1 in \cite{Grigoryan2024a}). By Proposition 4.10 in \cite{grigor2005stability} it satisfies the relative
		Faber-Krahn inequality.
		In \cite{Grigoryan2024a} we constructed a solution which satisfies the estimate
		\begin{equation*}
			 u(r, t)\simeq \frac{1}{t^{\alpha/\beta }}\left(1+\frac{r}{t^{1/\beta}}\right)^{-p/D},
		\end{equation*}%
		which shows that our estimate (\ref{uoffon}) is sharp on such manifolds.
	\end{remark}

	\bibliographystyle{abbrv}
	\bibliography{librarycacc}
	
		\emph{Universit\"{a}t Bielefeld, Fakult\"{a}t f\"{u}r Mathematik, Postfach
		100131, D-33501, Bielefeld, Germany}
	
	\texttt{grigor@math.uni-bielefeld.de}
	
	\texttt{philipp.suerig@uni-bielefeld.de}
	
		\emph{School of Mathematical Sciences, Fudan University, 200433, Shanghai, China}
		
		\texttt{jsun22@m.fudan.edu.cn}

\end{document}